\ifpdf\usepackage{epstopdf}\fi
 \newtheorem{proposition}{Proposition}   
 \newtheorem{corollary}{Corollary}       
 \newtheorem{remark}{Remark}             
 \renewenvironment{table}{%
    \@float{table}}
    {\end@float}
       \def\vk{\varkappa}
 \def\ov{\overline}   \def\wh{\widehat}   \def\wt{\widetilde}
 \def\ll{\left}       \def\rr{\right}     
 \def\leq{\leqslant}  \def\geq{\geqslant} \def\pa{\partial}
  \def\bPsi{\boldsymbol\Psi}     \def\bPhi{\boldsymbol\Phi}
 \DeclareMathOperator{\Rea}{Re}
 \DeclareMathOperator{\Ima}{Im}
 \newcommand{\abs}[1]{\ll\lvert#1\rr\rvert}
 \renewcommand{\leq}{\leqslant}
 \renewcommand{\geq}{\geqslant}
 \renewcommand{\raggedright}{\leftskip=0pt \rightskip=0pt plus 0cm}
\begin{document}
\begin{center}
{\large{\bf SOME REMARKS ON DISCRETE AND SEMI-DISCRETE}}
{\large{\bf TRANSPARENT BOUNDARY CONDITIONS FOR SOLVING}}
{\large{\bf THE TIME-DEPENDENT SCHR\"ODINGER EQUATION ON THE HALF-AXIS}}
\vskip 0.3cm
\textsc{Alexander Zlotnik}
\footnote{Department of Higher Mathematics at Faculty of Economics,
National Research University Higher School of Economics,
Myasnitskaya 20, 101000 Moscow, Russia.
}
{and \textsc{Ilya Zlotnik}}
\footnote{Settlement Depository Company, 2-oi Verkhnii Mikhailovskii proezd 9, building 2, 115419 Moscow, Russia.
}
\end{center}
\begin{abstract}
We consider the generalized time-dependent Schr\"odinger equation on the half-axis and a broad family of finite-difference schemes with the discrete transparent boundary conditions (TBCs) to solve it.
We first rewrite the discrete TBCs in a simplified form explicit in space step $h$.
Next, for a selected scheme of the family, we discover that the discrete convolution in time in the discrete TBC does not depend on $h$ and, moreover, it coincides with the corresponding convolution in the semi-discrete TBC rewritten similarly. This allows us to prove the bound for the difference between the kernels of the discrete convolutions in the discrete and semi-discrete TBCs (for the first time). Numerical experiments on replacing the discrete TBC convolutions by the semi-discrete one exhibit truly small absolute errors though not relative ones in general. The suitable discretization in space of the semi-discrete TBC for the higher-order Numerov scheme is also discussed.
\end{abstract}
\par MSC classification: 65M06, 35Q40
\par Keywords: time-dependent Schr\"odinger equation, finite-difference schemes, Numerov scheme, discrete and semi-discrete transparent boundary conditions, discrete convolution
\section{Introduction}%
The time-dependent Schr\"odinger equation is crucial in quantum mechanics and electronics, atomic and nuclear physics, wave physics, etc. It should be often solved in unbounded space domains.
Several approaches were developed and investigated for solving problems of such kind in 1D, see review
\cite{AABES08}.
Among them one exploits the so-called discrete (both in space and time) transparent boundary conditions (DTBCs) at artificial boundaries, see  \cite{A98,EA01,AES03,M04,SA08} and \cite{DZ06,DZ07,DZZ09,ZZ12}.
Their advantages are the complete absence of spurious reflections in practice as well as the rigorous mathematical background and relevant stability results in theory.
Earlier the semi-discrete (continuous in space and discrete in time) TBCs were also constructed and studied \cite{SD95,SY97,YFS01,AB03}; they are simpler in constructing and thus have wider range of applications.
\par In this paper, we consider the generalized time-dependent Schr\"odinger equation on the half-axis and a broad family of finite-difference schemes with the DTBCs to solve it studied previously in \cite{DZZ09}. The schemes are two-level symmetric (of the Crank-Nicolson type) in time and with a parametric average in space that allows to include into consideration a number of particular schemes of various origin.
We first rewrite the DTBCs in a simplified form explicit in space step $h$.
Next, for a selected scheme in the family, we discover that the discrete convolution in time in the DTBC does not depend on $h$ and, moreover, it coincides with the corresponding one in the semi-discrete TBC (SDTBC) rewritten preliminarily in the similar form. The latter unexpected fact allows us to prove the bound for the difference between the kernels of the discrete convolutions representing discrete and semi-discrete TBCs (what is done for the first time).
\par The results of numerical experiments on replacing the DTBCs by the semi-discrete one are also presented. In general, they exhibit that the corresponding absolute errors are truly small, uniformly in time and both in $L^2$ and $C$ space mesh norms, though this can be not the case for the relative ones. We also discuss the suitable discretization in space of the semi-discrete TBC for the higher-order Numerov scheme.

\section{Theoretical results}
\label{FFDS}

We consider the initial-boundary value problem for a generalized 1D time-dependent Schr\"o\-din\-ger equation on the half-axis
\begin{gather}
 i\hbar\rho D_t\psi=-\frac{\hbar^{\,2}}{2}D(BD\psi)+V\psi
 \ \ \mbox{for}\ \ x>0\ \ \mbox{and}\ \ t>0,
\label{eq:se}\\
 \psi|_{x=0}=0\ \ \mbox{and}\ \ \int_0^{\infty} \left|\psi(x,t)\right|^2dx<\infty
 \ \ \mbox{for}\ t>0,
\label{eq:bc}\\
 \left. \psi\right|_{t=0}=\psi^0(x)\ \ \mbox{for}\ \ x>0.
\label{eq:ic}
\end{gather}
Hereafter the unknown wave function $\psi=\psi(x,t)$ is complex-valued,
$i$ is the imaginary unit, $\hbar>0$ is a physical constant and $\rho(x)$, $B(x)$ and
$V(x)$ are the given real-valued coefficients such that $\rho\geq \underline{\rho}>0$ and
$B\geq \underline{B}>0$.
Also $D_t=\frac{\partial}{\partial t}$ and $D=\frac{\partial}{\partial x}$ are the partial derivatives.
\par We also assume that, for some (sufficiently large) $X_0>0$,
 \begin{equation}
 \rho(x)=\rho_{\infty}>0, \ B(x)=B_{\infty}>0, \ V(x)=V_{\infty}
 \ \ \mbox{and}\ \ \psi^0(x)=0\ \ \mbox{for}\ \ x\geq X_0,
 \label{eq:s21}
 \end{equation}
so that \eqref{eq:se} becomes the much simpler Schr\"odinger equation with constant coefficients
\begin{equation}
 i\hbar\rho_{\infty} D_t\psi
 =-\frac{\hbar^{\,2}}{2} B_{\infty}D^2\psi+V_{\infty}\psi
 \ \
 \mbox{for}\ \ x>X_0\ \ \mbox{and}\ \ t>0.
\label{eq:sec}
\end{equation}
\par We fix some $X>X_0$ and define a non-uniform mesh $\overline{\omega}_{h,\infty}$ in $x$ on $[0,\infty)$ with the
nodes $0=x_0<\dots <x_J=X<\dots$ and the steps $h_j:=x_j-x_{j-1}$ supposing that $h_J\leq X-X_0$ and $h_j=h\equiv h_J$ for $j\geq J$. Let $\omega_{h,\infty}:=\overline{\omega}_{h,\infty}\setminus\,\{0\}$ and $h_{j+1/2}:=\frac{h_j+h_{j+1}}{2}$.
We exploit the backward and modified forward difference quotients as well as the backward and forward averages in $x$
\[
 \overline{\partial}_xW_j:= \frac{W_j-W_{j-1}}{h_j},\ \
 \widehat{\partial}_xW_j:= \frac{W_{j+1}-W_{j}}{h_{j+1/2}},
\]
\[
 \overline{s}_xW_j:= \frac{W_{j-1}+ W_j}{2},\ \
 \hat{s}_x W_j:=\frac{h_j}{2h_{j+1/2}}\,W_{j}
 +\frac{h_{j+1}}{2h_{j+1/2}}\,W_{j+1}.
\]
We also recall the three-point averaged operator of multiplication by a real mesh function $\varkappa$
\[
 C_{\theta}[\varkappa]W_j:=\theta\,\frac{h_j}{h_{j+1/2}}\,\varkappa_jW_{j-1}
 +(1-2\theta)(\hat{s}_x \varkappa_j)W_j
 +\theta\,\frac{h_{j+1}}{h_{j+1/2}}\,\varkappa_{j+1}W_{j+1}
\]
depending on the real parameter $\theta$ \cite{DZZ09}.
\par We also define the uniform in time mesh $\overline{\omega}^{\,\tau}$ with the nodes $t_m=m\tau$, $m\geq 0$, and the step $\tau>0$; let $\omega^\tau:=\overline{\omega}^{\,\tau}\setminus\,\{0\}$.
We exploit the backward difference quotient, the symmetric average and the backward shift in time
\[
 \overline{\partial}_tY:= \frac{Y-\check{Y}}{\tau},\ \
 \overline{s}_tY:= \frac{\check{Y}+Y}{2},\ \
 \check{Y}^m:=Y^{m-1}.
\]
\par In \cite{DZZ09}, a broad family of two-level symmetric in time (i.e., of the Crank-Nicolson type) finite-difference schemes was studied for problem \eqref{eq:se}-\eqref{eq:s21}
\begin{gather}
 i \hbar C_{\theta}[\rho_h]\overline{\partial}_t \Psi
 =-\frac{\hbar^2}{2}\,\hat{\partial}_x \left(B_h \overline{\partial}_x\overline{s}_t\Psi\right)+C_{\theta}[V_h]W\overline{s}_t\Psi\ \ \mbox{on}\ \ \omega_{h,\infty}\times \omega^{\tau},
\label{s23}\\
\Psi|_{j=0}=0\ \ \mbox{on}\ \  \omega^{\tau},\ \
\Psi^0=\Psi^0_h\ \ \mbox{on}\ \ \overline{\omega}_{h,\infty}.
\label{s210}
\end{gather}
Here $\rho_h,B_h$ and $V_h$ are (real) approximations of $\rho,B$ and $V$; we suppose that $\rho_h\geq \underline{\rho}$ and
$B_h\geq \underline{B}$. In the simplest case, one can set $\varkappa_{h_j}:=\varkappa(x_{j-1/2})$ for continuous $\varkappa=\rho,B$ and $V$.
\par For different values of $\theta$, the family includes a number of particular schemes:
the standard Crank-Nicolson scheme without averages (for $\theta=0$) studied in \cite{A98,EA01,DZ06,DZ07},
the finite element method (FEM) for linear elements (for $\theta=\frac{1}{6}$) studied in particular in \cite{AB03,SY97},
a four-point symmetric vector (or multi-symplectic) scheme (for $\theta=\frac{1}{4}$) studied in equivalent forms in \cite{HJW05,HLMZ05}
and, in the case of constant coefficients (for $\theta=\frac{1}{12}$), the higher-order Numerov scheme presented in \cite{M04,ZL10} (see also the 2D case in \cite{SA08}).
The case $\theta=\frac{1}{4}$ corresponds also to the linear FEM with the numerical integration based on the midpoint rule (in the integrals containing $\rho$ and $V$).
\par The uniform in time stability in two space norms was proved in \cite{DZZ09} for $\theta\leq\frac14$ that we suppose to be valid below.
\par The DTBC allows to restrict rigorously the decaying solution of a scheme on the infinite mesh to the finite in space mesh $\{x_j\}_{j=0}^J\times\overline{\omega}^{\,\tau}$. For scheme \eqref{s23}, \eqref{s210}, the DTBC was derived in \cite{DZZ09} in the form
\begin{gather}
 \frac{\hbar^2}{2}B_{\infty} \overline{\partial}_x \overline{s}_t \Psi_J^m
 -hs_{x\theta}^-\left(i \hbar \rho_{\infty}\overline{\partial}_t \Psi-V_{\infty} \overline{s}_t \Psi\right)_{J}^m
  =\frac{\hbar^2}{2}B_{\infty} {\mathcal S}_{\rm ref\,\theta}^m{\mathbf \Psi}_J^m\ \ \text{on}\ \ \omega^\tau,
\label{eq:p31}
\end{gather}
where  $s_{x\theta}^-W_J:=\theta W_{J-1}+\left(\frac{1}{2}-\theta\right)W_J$ and ${\mathbf \Psi}_J^m:=\left\{ \Psi_J^l\right\}_{l=1}^m$.
\begin{remark}
Notice that the left-hand side of \eqref{eq:p31} is the approximation to $\frac{\hbar^2}{2}B_{\infty}D\psi(X,(m-\frac{\tau}{2}))$ of the order $O(h^2+\tau^2)$ for any $\theta$ and even of the order $O(h^3+\tau^2)$ for $\theta=\frac16$.
\label{rem1}
\end{remark}
\par We intend to rewrite the operator ${\mathcal S}_{\rm ref\,\theta}$ in a simplified form explicit in $h$. Let $P_m(\mu)$ be the classical Legendre polynomials extended by $P_m(\mu)=0$ for $m<0$. We need the constants
\[
 \wh{a}=\wh{a}_0+i\,\wh{a}_1,\ \
 \wh{a}_0=\frac{V_{\infty}}{\hbar^2B_{\infty}},\ \
 \wh{a}_1=\frac{2\rho_{\infty}}{\tau\hbar B_{\infty}}>0
\]
independent of $h$.
Let $\arg z$ be defined up to $2\pi k$ for any integer $k$
whereas $\arg_0z\in[0,2\pi)$, for $z\in\mathbb{C}\setminus\{0\}$.
\begin{proposition}\label{prop:p5}
The operator in the DTBC \eqref{eq:p31} has the discrete convolution form
\begin{equation}\label{eq:p53}
 \mathcal{S}_{\rm ref\,\theta}^m\bPhi^m=c_{0\theta}(R(\vk_{\theta},\mu_{\theta})*\Phi)^m
 \equiv c_{0\theta}\sum\nolimits_{l=0}^m R^l(\vk_{\theta},\mu_{\theta})*\Phi^{m-l}
\end{equation}
for any $\Phi$: $\overline{\omega}^{\,\tau}\to {\mathbb C}$ such that $\Phi^0=0$, where
\begin{equation}\label{eq:p54}
 R^m(\vk,\mu):=-\frac{\vk^m}{2m-1}\ll[P_{m}(\mu)-P_{m-2}(\mu)\rr]\ \ \text{for}\ m\geq 0,
\end{equation}
with the parameters
\begin{gather}\label{eq:p57}
 c_{0\theta}=-\frac{|\wh{\alpha}_{\theta}|^{1/2}}{2} e^{-i\,(\arg_0 \wh{\alpha}_{\theta})/2},\ \
 \vk_{\theta}=-e^{i\arg\wh{\alpha}_{\theta}},\ \
 \mu_{\theta}=\frac{\wh{\beta}_{\theta}}{|\wh{\alpha}_{\theta}|}\in (-1,1),
\\[1mm] \label{eq:p58}
 \wh{\alpha}_{\theta}=2\wh{a}+(1-4\theta)h^2\,\wh{a}^2\neq 0,\ \
 \wh{\beta}_{\theta}=2\wh{a}_0+(1-4\theta)h^2|\wh{a}|^2.
\end{gather}
\end{proposition}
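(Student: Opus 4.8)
The plan is to pass to generating functions (the $Z$-transform) in the time index $m$ and to identify the transform of the claimed kernel $R^m(\vk_\theta,\mu_\theta)$ with that of the operator $\mathcal{S}_{\rm ref\,\theta}$ derived in \cite{DZZ09}. First I would record the generating function of the kernel itself. Set $\wh R(z):=\sum_{m\geq 0}R^m(\vk,\mu)z^m$. Using the classical derivative identity $P_m'(\mu)-P_{m-2}'(\mu)=(2m-1)P_{m-1}(\mu)$ (the prime denoting $d/d\mu$) one gets $\partial_\mu R^m(\vk,\mu)=-\vk^m P_{m-1}(\mu)$, so by the Legendre generating function $\sum_{m\geq 0}P_m(\mu)s^m=(1-2\mu s+s^2)^{-1/2}$,
\[
 \partial_\mu\wh R(z)=-\vk z\,(1-2\mu\vk z+\vk^2 z^2)^{-1/2}=\partial_\mu\sqrt{1-2\mu\vk z+\vk^2 z^2}.
\]
To pin the $\mu$-independent integration constant I would evaluate at $\mu=1$: since $P_m(1)=1$ gives $R^0=1$, $R^1=-\vk$ and $R^m(\vk,1)=0$ for $m\geq 2$, while $\sqrt{1-2\vk z+\vk^2z^2}=1-\vk z$, both sides coincide at $\mu=1$. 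Hence, for all admissible $\mu$,
\[
 \sum\nolimits_{m\geq 0}R^m(\vk,\mu)z^m=\sqrt{1-2\mu\vk z+\vk^2 z^2}.
\]

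Next I would bring the operator $\mathcal{S}_{\rm ref\,\theta}$ into the same shape. On the uniform part $j\geq J$ of the mesh, where the coefficients are constant, scheme \eqref{s23} becomes a constant-coefficient difference equation in $j$ and $m$; applying the generating transform in $m$ turns it into a second-order recurrence in $j$ whose decaying solution is governed by the root of modulus $<1$ of a characteristic quadratic with $z$-dependent coefficients. Expressing $\mathcal{S}_{\rm ref\,\theta}$ through this root as in \cite{DZZ09} and simplifying the coefficients explicitly in $h$, I expect the transform of its kernel to reduce to $\tfrac12\sqrt{\wh\alpha_\theta z^2+2\wh\beta_\theta z+\overline{\wh\alpha_\theta}}$ with $\wh\alpha_\theta,\wh\beta_\theta$ as in \eqref{eq:p58}; the conjugate-symmetric coefficient pattern $\overline{\wh\alpha_\theta},\,2\wh\beta_\theta,\,\wh\alpha_\theta$ is the structural feature that will make $\abs{\vk_\theta}=1$ and $\mu_\theta$ real.

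Matching the two transforms then amounts to factoring the quadratic: imposing
\[
 c_{0\theta}^2\,(1-2\mu_\theta\vk_\theta z+\vk_\theta^2 z^2)=\tfrac14\big(\overline{\wh\alpha_\theta}+2\wh\beta_\theta z+\wh\alpha_\theta z^2\big)
\]
forces $c_{0\theta}^2=\tfrac14\overline{\wh\alpha_\theta}$, $\vk_\theta^2=\wh\alpha_\theta/\overline{\wh\alpha_\theta}=e^{2i\arg\wh\alpha_\theta}$ and $\mu_\theta\vk_\theta=-\wh\beta_\theta/\overline{\wh\alpha_\theta}$; choosing the branch with $\vk_\theta=-e^{i\arg\wh\alpha_\theta}$ then yields $\mu_\theta=\wh\beta_\theta/\abs{\wh\alpha_\theta}$ and $c_{0\theta}=-\tfrac12\abs{\wh\alpha_\theta}^{1/2}e^{-i(\arg_0\wh\alpha_\theta)/2}$, i.e.\ exactly \eqref{eq:p57}. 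To confirm $\mu_\theta\in(-1,1)$ I would verify the clean identity $\abs{\wh\alpha_\theta}^2-\wh\beta_\theta^2=4\wh a_1^2>0$, obtained by expanding $(\Rea\wh\alpha_\theta)^2+(\Ima\wh\alpha_\theta)^2-\wh\beta_\theta^2$ via \eqref{eq:p58} and $\wh a=\wh a_0+i\wh a_1$. Finally, since $\Phi^0=0$, the product of transforms $\wh R(z)\,\wh\Phi(z)$ corresponds precisely to the convolution sum in \eqref{eq:p53}, giving $\mathcal{S}_{\rm ref\,\theta}^m\bPhi^m=c_{0\theta}(R*\Phi)^m$.

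I expect the main obstacle to be the second paragraph: faithfully reducing the \cite{DZZ09} form of $\mathcal{S}_{\rm ref\,\theta}$ to the square root of a quadratic and, above all, selecting the correct branch of that square root. The branch must be the one analytic at $z=0$ with value $c_{0\theta}$ there (the decaying-solution branch), and this is what fixes the minus signs in $c_{0\theta}$ and in $\vk_\theta$ together with the half-argument $e^{-i(\arg_0\wh\alpha_\theta)/2}$; by comparison, the Legendre identity of the first paragraph and the positivity check $\abs{\wh\alpha_\theta}^2-\wh\beta_\theta^2=4\wh a_1^2$ are essentially routine.
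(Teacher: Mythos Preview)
Your generating-function route is sound and largely reproduces what is already in \cite{DZZ09}; the paper's proof is much shorter because it simply cites \cite{DZZ09} for the convolution form \eqref{eq:p53}--\eqref{eq:p54} and for formulas \eqref{eq:p57}--\eqref{eq:p58} after the substitution $a=h^2\wh a$. The only thing the paper actually proves is the sign in front of $c_{0\theta}$: in \cite{DZZ09} the formula reads $c_{0\theta}=(-1)^{k_0}\tfrac12|\wh\alpha_\theta|^{1/2}e^{-i(\arg_0\wh\alpha_\theta)/2}$ with $k_0$ determined by $\Delta_\theta:=2\arg_0(1-2\theta h^2\wh a)-\arg_0\wh\alpha_\theta\in(2k_0\pi,2(k_0+1)\pi)$, and the content of the proposition is that $(-1)^{k_0}=-1$ for every $\theta\le\tfrac14$ (and \emph{not} for $\theta>\tfrac14$ in general). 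The paper establishes this by a four-case analysis $\theta<0$, $\theta=0$, $0<\theta<\tfrac14$, $\theta=\tfrac14$, in each case locating $\Delta_\theta$ in the correct $2\pi$-interval.

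This is exactly the step you flag as the ``main obstacle'' and then leave undone. Saying ``the decaying-solution branch fixes the minus signs'' is the right intuition, but it is not a proof: from $c_{0\theta}^2=\tfrac14\overline{\wh\alpha_\theta}$ both signs are algebraically admissible, and which one corresponds to the root of modulus $<1$ of the spatial characteristic equation genuinely depends on $\theta$ through the factor $1-2\theta h^2\wh a$ (this is where the quantity $\Delta_\theta$ enters). To complete your argument you must track how the decaying root varies with $\theta$ and show that, for $\theta\le\tfrac14$, it always selects the minus sign; the paper's case analysis is one concrete way to do this, and some such argument is unavoidable since the conclusion fails for $\theta>\tfrac14$. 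Your Legendre computation and the check $|\wh\alpha_\theta|^2-\wh\beta_\theta^2=4\wh a_1^2$ are fine but, as you say, routine; the sign determination is the substance.
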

\begin{proof}
The presented formulas follow from respective ones in \cite{DZZ09} (refined from misprints) by inserting there $a=h^2\,\wh{a}$ excepting formula \eqref{eq:p57} for $c_{0\theta}$, where the sign minus should be replaced by $(-1)^{k_0}$ with the integer $k_0$ such that
\[
 \Delta_{\theta}:=2\arg_0(1-2\theta h^2\,\wh{a})-\arg_0\wh{\alpha}_{\theta}\in(2k_0\pi, 2(k_0+1)\pi)
\]
(the left $\arg_0$ could be replaced by $\arg$).
\par Let us prove that always $(-1)^{k_0}=-1$ for $\theta\leq\frac{1}{4}$ (note that for $\theta>\frac{1}{4}$ this is not the case in general). For $\theta<0$, we rewrite
\[
 \Delta_{\theta}=2\arg_0\zeta_{\theta}-
 \arg_0\left[\left(\zeta_{\theta}-\frac{1}{2\abs{\theta}}\right)\left(\zeta_{\theta}-\frac{1}{2\abs{\theta}(1+4\abs{\theta})}\right)\right],
\]
with $\zeta_{\theta}:=h^2\,\wh{a}+\frac{1}{2\abs{\theta}}$. Since $\arg_0\zeta_{\theta}\in(0,\pi)$, we get $\arg_0(\zeta_{\theta}-\delta)\in(\arg\zeta_{\theta},\pi)$ for $\delta>0$ and then $\Delta_{\theta}\in(-2\pi,0)$. Obviously $\Delta_{\theta}\in(-2\pi,0)$ for $\theta=0$, too.
\par For $0<\theta<\frac{1}{4}$, we write down
\[
 \Delta_{\theta}=2\arg_0\zeta_{\theta}-
 \arg_0\left[\left(\zeta_{\theta}-\frac{1}{2\theta}\right)\left(\zeta_{\theta}-\frac{1}{2\theta(1-4\theta)}\right)\right]
\]
with $\zeta_{\theta}:=\frac{1}{2\theta}-h^2\,\wh{a}$. Since now $\arg_0\zeta_{\theta}\in(\pi,2\pi)$, we get $\arg_0(\zeta_{\theta}-\delta)\in(\pi,\arg\zeta_{\theta})$ for $\delta>0$ and thus $\Delta_{\theta}\in(2\pi,4\pi)$.
\par Finally, for $\theta=\frac{1}{4}$, we get
$
 \Delta_{\theta}=2\arg_0\zeta_{\theta}-\arg_0\left(2-\zeta_{\theta}\right)
$
with $\zeta_{\theta}:=2-h^2\,\wh{a}$. We have $\arg_0\zeta_{\theta}\in(\pi,2\pi)$ and $\arg_0\left(2-\zeta_{\theta}\right)\in(0, \arg\zeta_{\theta}-\pi)$, thus $\Delta_{\theta}\in(2\pi,4\pi)$ too.
\end{proof}
\par Note that the fixed sign in formula \eqref{eq:p57} for $c_{0\theta}$ is essential to study asymptotic behavior as $h\to 0$ below.
\par We also rewrite \eqref{eq:p57} and \eqref{eq:p58} in a form similar to \cite{ZZ12}. Let $\wt{\alpha}_{\theta}:=2+(1-4\theta)h^2\,\wh{a}$, then
\begin{equation}\label{eq:p7a1}
 \wh{\alpha}_{\theta}=\wh{a}\,\wt{\alpha}_{\theta},\ \
 \arg_0\wh{\alpha}_{\theta} = \arg_0\wh{a}+\arg_0\wt{\alpha}_{\theta}\in(0,2\pi)
\end{equation}
since $0\leq \arg_0\wt{\alpha}_{\theta}<\arg_0\wh{a}<\pi$.
\begin{corollary}\label{cor:p7a}
Formulas \eqref{eq:p57} can be rewritten as
\begin{equation}\label{eq:p7a3}
\vk_{\theta}=-\exp\left\{i(\arg\wh{a}+\arg\wt{\alpha}_{\theta})\right\},\ \
\mu_{\theta}=\cos\left(\arg\wh{a}-\arg\wt{\alpha}_{\theta}\right).
\end{equation}
\end{corollary}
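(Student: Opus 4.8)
The plan is to derive both identities in \eqref{eq:p7a3} directly from the definitions \eqref{eq:p57}, \eqref{eq:p58} in Proposition \ref{prop:p5} together with the factorization \eqref{eq:p7a1} just established. No new machinery is needed; everything reduces to elementary manipulations with arguments and moduli of complex numbers.

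For $\vk_{\theta}$ the argument is immediate. Starting from $\vk_{\theta}=-e^{i\arg\wh{\alpha}_{\theta}}$ and using $\wh{\alpha}_{\theta}=\wh{a}\,\wt{\alpha}_{\theta}$, the multivalued identity $\arg\wh{\alpha}_{\theta}=\arg\wh{a}+\arg\wt{\alpha}_{\theta}$ holds; since $e^{i(\cdot)}$ depends only on its argument modulo $2\pi$, substitution yields $\vk_{\theta}=-\exp\{i(\arg\wh{a}+\arg\wt{\alpha}_{\theta})\}$ at once.

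For $\mu_{\theta}$ the key step is to recognize the numerator $\wh{\beta}_{\theta}$ as a real part. Since $(1-4\theta)h^2$ is real, I would compute $\wh{a}\,\ov{\wt{\alpha}_{\theta}}=\wh{a}\,\ll(2+(1-4\theta)h^2\,\ov{\wh{a}}\rr)=2\wh{a}+(1-4\theta)h^2|\wh{a}|^2$, whose real part is exactly $2\wh{a}_0+(1-4\theta)h^2|\wh{a}|^2=\wh{\beta}_{\theta}$. Thus $\wh{\beta}_{\theta}=\Rea\ll(\wh{a}\,\ov{\wt{\alpha}_{\theta}}\rr)$. Writing this product in polar form, $\wh{a}\,\ov{\wt{\alpha}_{\theta}}=\abs{\wh{a}}\,\abs{\wt{\alpha}_{\theta}}\,e^{i(\arg\wh{a}-\arg\wt{\alpha}_{\theta})}$, gives $\wh{\beta}_{\theta}=\abs{\wh{a}}\,\abs{\wt{\alpha}_{\theta}}\cos(\arg\wh{a}-\arg\wt{\alpha}_{\theta})$. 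Combined with $\abs{\wh{\alpha}_{\theta}}=\abs{\wh{a}}\,\abs{\wt{\alpha}_{\theta}}$ from \eqref{eq:p7a1} (both moduli nonzero, since $\wh{\alpha}_{\theta}\neq 0$ forces $\wt{\alpha}_{\theta}\neq 0$), dividing and cancelling produces $\mu_{\theta}=\cos(\arg\wh{a}-\arg\wt{\alpha}_{\theta})$. The cosine being even and $2\pi$-periodic, the result is independent of the choice of representatives for the multivalued arguments, which is consistent with the use of $\arg$ rather than $\arg_0$ in \eqref{eq:p7a3}.

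The only nonroutine point, and hence the main obstacle, is spotting that $\wh{\beta}_{\theta}$ matches $\Rea(\wh{a}\,\ov{\wt{\alpha}_{\theta}})$ with the conjugate placed on $\wt{\alpha}_{\theta}$: it is precisely conjugation that turns the $\wh{a}^{\,2}$ term inside $\wh{\alpha}_{\theta}$ into the $\abs{\wh{a}}^2$ term inside $\wh{\beta}_{\theta}$. Once this is observed the remaining computation is purely mechanical.
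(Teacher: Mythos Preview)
Your proof is correct and follows essentially the same approach as the paper: both hinge on the observation that $\wh{\beta}_{\theta}=\Rea(\wh{a}\,\wt{\alpha}_{\theta}^{\,*})$, from which $\mu_{\theta}=\Rea\,e^{i(\arg\wh{a}-\arg\wt{\alpha}_{\theta})}$ follows immediately, while the formula for $\vk_{\theta}$ is a direct consequence of the factorization \eqref{eq:p7a1}. Your version simply spells out the intermediate computations more explicitly.
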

\begin{proof}
It suffices to note that
\[
 \frac{\wh{\beta}_{\theta}}{\abs{\wh{\alpha}_{\theta}}}
=\frac{\Rea(\wh{a}\,\wt{\alpha}_{\theta}^*)}{\abs{\wh{a}\,\wt{\alpha}_{\theta}}}
=\Rea e^{i(\arg\wh{a}-\arg\wt{\alpha}_{\theta})}.
\]
\par In addition, to make the derivation in \cite{DZZ09} closer to the FEM case \cite{ZZ12}, notice that, for the involved linear-fractional function
\[
 \gamma_{\theta}(z)=1+\frac{az+a^*}{bz+b^*}\ \ \text{with}\ \ b=1-2\theta a,
\]
one can write down
\[
 \gamma_{\theta}^2(z)-1=(\gamma_{\theta}(z)-1)(\gamma_{\theta}(z)+1)
=(\gamma_{\theta}^2(0)-1)\frac{\left(\frac{a}{a^*}z+1\right)\left(\frac{a+2b}{a^*+2b^*}z+1\right)}{\left(\frac{b}{b^*}z+1\right)^2}.
\]
The numerator of the fraction can be rewritten as
\[
 \left(\frac{a}{a^*}z+1\right)\left(\frac{a+2b}{a^*+2b^*}z+1\right)
=\left(e^{i\,2\arg\wh{a}}z+1\right)\left(e^{i\,2\arg\wt{\alpha}_{\theta}}z+1\right)
=(\vk_{\theta}z)^2-2\mu_{\theta}\vk_{\theta}z+1
\]
according to \cite{DZZ09}, and since
\[
 \left(e^{i\,2\varphi_1}z+1\right)\left(e^{i\,2\varphi_2}z+1\right)
=(-e^{i(\varphi_1+\varphi_2)}z)^2-2\cos(\varphi_1-\varphi_2)(-e^{i(\varphi_1+\varphi_2)})z+1
\]
for any real $\varphi_1$ and $\varphi_2$, formulas \eqref{eq:p7a3} appear once again.
\end{proof}
\par The next result is a direct consequence (see \cite{EA01,DZZ09}) of the classical Laplace asymptotic formula for the Legendre polynomials and the last corollary.
\begin{corollary}\label{cor:p7c}
The following asymptotic formula holds
\begin{gather*}
 R^m(\vk_\theta,\mu_\theta)=\frac{(-1)^m}{m^{3/2}}\sqrt{\frac{2}{\pi}\sin(\arg\wh{a}-\arg\wt{\alpha}_\theta)}\,e^{im(\arg\wh{a}+\arg\wt{\alpha}_\theta)}
\\[1mm]
 \times\cos\left(\left(m-\frac12\right)(\arg_0\wh{a}-\arg_0\wt{\alpha}_\theta)+\frac{3\pi}{4}\right)
 +O_\delta\left(\frac{1}{m^{5/2}}\right)
\end{gather*}
as $m\to\infty$ provided that $|\mu_\theta|\leq 1-\delta$ with some $\delta>0$.
\end{corollary}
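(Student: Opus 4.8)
The plan is to feed the explicit data of Corollary~\ref{cor:p7a} into the classical Laplace--Heine asymptotic formula for Legendre polynomials and then collapse the resulting two-term expression by a product-to-sum identity. Write $\phi:=\arg_0\wh{a}-\arg_0\wt{\alpha}_\theta$ and $\psi:=\arg\wh{a}+\arg\wt{\alpha}_\theta$, so that by Corollary~\ref{cor:p7a} one has $\mu_\theta=\cos\phi$ and $\vk_\theta=-e^{i\psi}$, hence $\vk_\theta^m=(-1)^m e^{im\psi}$. By \eqref{eq:p7a1} we have $\phi\in(0,\pi)$, and the hypothesis $|\mu_\theta|\leq 1-\delta$ is exactly what confines $\phi$ to a compact subinterval $[\phi_\delta,\pi-\phi_\delta]\subset(0,\pi)$; this keeps $\sin\phi\geq\sin\phi_\delta>0$ bounded away from $0$ and is what makes every remainder below uniform in the admissible parameters, i.e.\ of the announced form $O_\delta(\cdot)$.

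First I would recall the Laplace--Heine expansion $P_n(\cos\phi)=\sqrt{\tfrac{2}{\pi n\sin\phi}}\cos\big((n+\tfrac12)\phi-\tfrac\pi4\big)+O(n^{-3/2})$, valid uniformly for $\phi$ in the above compact interval, and apply it with $n=m$ and $n=m-2$. In the common prefactor I would replace $\sqrt{2/(\pi(m-2)\sin\phi)}$ by $\sqrt{2/(\pi m\sin\phi)}$ at the cost of a relative $O(1/m)$ error. The two cosine phases then differ only in their linear parts, so the identity $\cos A-\cos B=-2\sin\tfrac{A+B}{2}\sin\tfrac{A-B}{2}$ with $A=(m+\tfrac12)\phi-\tfrac\pi4$ and $B=(m-\tfrac32)\phi-\tfrac\pi4$ gives $\tfrac{A+B}{2}=(m-\tfrac12)\phi-\tfrac\pi4$ and $\tfrac{A-B}{2}=\phi$, whence
\[
P_m(\cos\phi)-P_{m-2}(\cos\phi)=-2\sqrt{\tfrac{2}{\pi m\sin\phi}}\,\sin\!\big((m-\tfrac12)\phi-\tfrac\pi4\big)\sin\phi+O(m^{-3/2}).
\]
The crucial structural point is that the extra factor $\sin\phi$ produced by this step merges with the square-root prefactor to give exactly $\sqrt{(2/\pi)\sin\phi}$.

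Next I would divide by $2m-1=2m\,(1+O(1/m))$ and multiply by the factor $-\vk_\theta^m=-(-1)^m e^{im\psi}$ dictated by \eqref{eq:p54}. Collecting the powers of $m$ yields the prefactor $m^{-3/2}$, the surviving exponential $e^{im\psi}$, and the amplitude $\sqrt{(2/\pi)\sin\phi}$; an elementary phase identity then recasts the remaining $\sin\big((m-\tfrac12)\phi-\tfrac\pi4\big)$ as the cosine with the $\tfrac{3\pi}{4}$ phase shift appearing in the statement. Finally I would verify that everything discarded is genuinely $O_\delta(m^{-5/2})$: each $O(m^{-3/2})$ Laplace remainder, the $O(1/m)$ prefactor mismatch, and the $O(1/m)$ expansion of $1/(2m-1)$ all acquire an extra factor $m^{-1}$ after division by $2m-1$, with all implied constants depending only on $\phi_\delta$.

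I expect the main obstacle to be precisely this last bookkeeping together with the exact matching of phase and sign. One must arrange the branch choices ($\arg$ versus $\arg_0$) and the product-to-sum step so that the final cosine argument and the overall sign come out exactly as stated, rather than off by a $\pm\pi/2$ phase or an overall sign; this is the delicate part, since the intermediate form $\sin\big((m-\tfrac12)\phi-\tfrac\pi4\big)$ equals the target cosine only after the correct elementary reduction. Controlling uniformity as $\phi\to 0,\pi$ (the cases excluded by the $\delta$-hypothesis, where $\sin\phi\to 0$) is what restricts the conclusion to $|\mu_\theta|\leq 1-\delta$.
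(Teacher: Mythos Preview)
Your approach is correct and is precisely the route the paper indicates: the authors do not spell out a proof but simply state that the result is a direct consequence of the classical Laplace asymptotic formula for the Legendre polynomials combined with Corollary~\ref{cor:p7a} (referring to \cite{EA01,DZZ09} for analogous computations). Your proposal fills in exactly those steps---writing $\mu_\theta=\cos\phi$, $\vk_\theta=-e^{i\psi}$ via Corollary~\ref{cor:p7a}, applying Laplace--Heine to $P_m$ and $P_{m-2}$, and then dividing by $2m-1$---so there is nothing substantively different to compare.
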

\par This corollary is important to guarantee stable computations using $R^m$.
The condition imposed on $\mu_\theta$ in it can be specified as follows.
\begin{corollary}\label{cor:p7d}
Let $A\geq 1$ be a parameter. The following conditions
\begin{gather}\label{eq:p7d1}
 \frac{|\wh{a}_0|}{\wh{a}_1}=\frac{\tau |V_\infty|}{\hbar\rho_\infty}\leq A,\ \
 (1-4\theta)\frac{\rho_\infty}{\hbar B_\infty}\frac{h^2}{\tau}\leq A
\end{gather}
are sufficient for validity of $|\mu_\theta|\leq 1-\delta(A)$ with some $\delta(A)>0$.
The right condition is also necessary.
\end{corollary}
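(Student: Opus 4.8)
The plan is to reduce everything to a single explicit identity for $1-\mu_\theta^2$ and then read off both sufficiency and necessity from it. Using Corollary~\ref{cor:p7a} I would write $\mu_\theta=\cos\varphi$ with $\varphi:=\arg\wh{a}-\arg\wt{\alpha}_\theta$, so that $1-\mu_\theta^2=\sin^2\varphi$. Since $\sin\varphi=\Ima\bigl(\wh{a}\,\wt{\alpha}_\theta^*\bigr)/\bigl(\abs{\wh{a}}\,\abs{\wt{\alpha}_\theta}\bigr)$ and, writing $q:=(1-4\theta)h^2\geq0$ so that $\wt{\alpha}_\theta=2+q\wh{a}$, a direct computation gives $\Ima(\wh{a}\,\wt{\alpha}_\theta^*)=2\wh{a}_1$ (the $q$-terms cancel), I obtain the key identity
\[
 1-\mu_\theta^2=\frac{4\wh{a}_1^2}{\abs{\wh{a}}^2\,\abs{\wt{\alpha}_\theta}^2},
 \qquad \abs{\wt{\alpha}_\theta}^2=(2+q\wh{a}_0)^2+(q\wh{a}_1)^2.
\]
Because $\wh{a}_1>0$, this already re-confirms $\abs{\mu_\theta}<1$ and reduces the corollary to two-sided estimates of the right-hand side.

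For sufficiency I would bound the two factors separately. The left condition $\abs{\wh{a}_0}/\wh{a}_1\leq A$ gives $\abs{\wh{a}}^2=\wh{a}_0^2+\wh{a}_1^2\leq(1+A^2)\wh{a}_1^2$, hence $4\wh{a}_1^2/\abs{\wh{a}}^2\geq 4/(1+A^2)$. The right condition is exactly $q\wh{a}_1\leq 2A$ once one substitutes $\wh{a}_1=2\rho_\infty/(\tau\hbar B_\infty)$; together with $q\abs{\wh{a}_0}\leq A\,q\wh{a}_1\leq 2A^2$ it yields $\abs{\wt{\alpha}_\theta}^2\leq(2+2A^2)^2+4A^2$. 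Combining the two bounds gives $1-\mu_\theta^2\geq\delta'(A)$ with an explicit $\delta'(A)>0$, and then $\abs{\mu_\theta}\leq\sqrt{1-\delta'(A)}\leq 1-\delta(A)$ with $\delta(A):=\delta'(A)/2>0$.

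For necessity of the right condition I would run the identity in the other direction. Dropping the nonnegative term $(2+q\wh{a}_0)^2$ gives $\abs{\wt{\alpha}_\theta}^2\geq(q\wh{a}_1)^2$, and using $\abs{\wh{a}}^2\geq\wh{a}_1^2$ I get $1-\mu_\theta^2\leq 4/(q\wh{a}_1)^2$. Thus if $\abs{\mu_\theta}\leq 1-\delta$ holds then $\delta\leq 1-\mu_\theta^2\leq 4/(q\wh{a}_1)^2$, forcing $q\wh{a}_1\leq 2/\sqrt{\delta}$, i.e. the right condition with $A$ of order $1/\sqrt{\delta}$; conversely, taking $\wh{a}_0=0$ (so the left condition holds trivially) and letting $q\to\infty$ drives $1-\mu_\theta^2\to0$, which shows that the left condition alone cannot keep $\mu_\theta$ away from $\pm1$.

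The only delicate points are essentially bookkeeping: translating the physical right condition into the clean inequality $q\wh{a}_1\leq 2A$, and confirming that the $q$-terms cancel in $\Ima(\wh{a}\,\wt{\alpha}_\theta^*)$, which is precisely what collapses the whole argument to one identity. I do not anticipate a genuine obstacle beyond keeping the constants explicit so that $\delta(A)>0$ is manifest throughout.
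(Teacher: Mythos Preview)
Your proposal is correct and follows essentially the same route as the paper: both proofs hinge on the identity $\sin(\arg\wh{a}-\arg\wt{\alpha}_\theta)=2\wh{a}_1/(\abs{\wh{a}}\,\abs{\wt{\alpha}_\theta})$ (equivalently your $1-\mu_\theta^2=4\wh{a}_1^2/(\abs{\wh{a}}^2\abs{\wt{\alpha}_\theta}^2)$) obtained from Corollary~\ref{cor:p7a}, and then bound the two factors $\abs{\wh{a}}/\wh{a}_1$ and $\abs{\wt{\alpha}_\theta}$ from above using the two hypotheses, while the lower bound $\abs{\wt{\alpha}_\theta}\geq q\wh{a}_1$ gives necessity. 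Your write-up is simply more explicit about the constants and includes the extra remark that $\wh{a}_0=0$, $q\to\infty$ shows the left condition alone is insufficient.
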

\begin{proof}
We have
\[
 \sin(\arg\wh{a}-\arg\wt{\alpha}_\theta)=\frac{\Ima(\wh{a}\wt{\alpha}_\theta^*)}{|\wh{a}\wt{\alpha}_\theta^*|}
 =\frac{2}{\frac{|\wh{a}|}{\wh{a}_1}|\wt{\alpha}_\theta|}.
\]
Furthermore
\begin{gather*}
\frac{|\wh{a}|}{\wh{a}_1}\left(2+(1-4\theta)\wh{a}_1h^2\frac{|\wh{a}|}{\wh{a}_1}\right),\ \
 1\leq\frac{|\wh{a}|}{\wh{a}_1}\leq \frac{|\wh{a}_0|}{\wh{a}_1}+1,\ \
 (1-4\theta)\wh{a}_1h^2\leq |\wt{\alpha}_\theta|\leq 2+(1-4\theta)\wh{a}_1h^2\frac{|\wh{a}|}{\wh{a}_1}.
\end{gather*}
According to Corollary \ref{cor:p7a}, this implies the result.
\end{proof}
\par Notice that if $|2+(1-4\theta)h^2\wh{a}_0|\geq\varepsilon_0>0$ (in particular, if $\theta=\frac14$, or $V_\infty\geq 0$, or $h$ is small enough), then conditions \eqref{eq:p7d1} are necessary and sufficient.
\par In practice, it is more effective to compute $R^m=R^m(\vk,\mu)$ by the recurrence relations \cite{EA01,DZZ09}
\begin{gather}\label{eq:p86a}
 R^0=1,\ \ R^1=-\vk\mu,\ \
 R^m=\frac{2m-3}{m}\,\vk\mu R^{m-1}-\frac{m-3}{m}\,\vk^2R^{m-2}\ \ \text{for}\ \ m\geq2.
\end{gather}
\begin{corollary}\label{cor:p8}
The operator $\mathcal{S}_{\rm ref\,1/4}$ (defined by formulas \eqref{eq:p53} and \eqref{eq:p54} for $\theta=\frac{1}{4}$) is independent of $h$ since its parameters are
\begin{gather}
 c_{0\,1/4}=-\left(\frac{|\wh{a}|}{2}\right)^{1/2}e^{-i\,(\arg_0 \wh{a})/2},\ \
 \vk_{1/4}=-e^{i\arg\wh{a}},\ \
 \mu_{1/4}=\frac{\wh{a}_0}{\wh{a}}.
\label{eq:p83}
\end{gather}
\end{corollary}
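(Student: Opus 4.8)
The proof is a direct specialization, and the plan is to exploit the single feature that distinguishes $\theta=\tfrac14$ from every other member of the family: the prefactor $1-4\theta$ appearing in \eqref{eq:p58} vanishes identically. First I would set $\theta=\tfrac14$ in the definitions \eqref{eq:p58} of $\wh{\alpha}_\theta$ and $\wh{\beta}_\theta$. Since $1-4\cdot\tfrac14=0$, every term carrying the factor $h^2$ drops out, leaving
\[
 \wh{\alpha}_{1/4}=2\wh{a},\qquad \wh{\beta}_{1/4}=2\wh{a}_0,
\]
both manifestly independent of $h$. Because the operator $\mathcal{S}_{\rm ref\,1/4}$ enters \eqref{eq:p53}--\eqref{eq:p54} only through the three scalars $c_{0\theta}$, $\vk_\theta$, $\mu_\theta$, and these in turn depend on $h$ solely via $\wh{\alpha}_\theta$ and $\wh{\beta}_\theta$, the vanishing of the $h^2$-terms already yields the asserted $h$-independence. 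This is the whole mechanism, and I expect no genuine obstacle here.

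Second, I would substitute $\wh{\alpha}_{1/4}=2\wh{a}$ and $\wh{\beta}_{1/4}=2\wh{a}_0$ into \eqref{eq:p57} and simplify, using only the elementary facts that multiplication by the positive real constant $2$ leaves both $\arg$ and $\arg_0$ unchanged while scaling the modulus by $2$, i.e. $\arg_0(2\wh{a})=\arg_0\wh{a}$ and $\abs{2\wh{a}}=2\abs{\wh{a}}$. This gives at once $\vk_{1/4}=-e^{i\arg\wh{a}}$, then $c_{0\,1/4}=-\tfrac{(2\abs{\wh{a}})^{1/2}}{2}e^{-i(\arg_0\wh{a})/2}=-(\abs{\wh{a}}/2)^{1/2}e^{-i(\arg_0\wh{a})/2}$, and finally $\mu_{1/4}=\tfrac{2\wh{a}_0}{2\abs{\wh{a}}}=\tfrac{\wh{a}_0}{\abs{\wh{a}}}$, recovering \eqref{eq:p83}.

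Alternatively, and perhaps more transparently, I would invoke Corollary \ref{cor:p7a}: setting $\theta=\tfrac14$ makes $\wt{\alpha}_{1/4}=2$, so $\arg\wt{\alpha}_{1/4}=0$, and \eqref{eq:p7a3} collapses immediately to $\vk_{1/4}=-e^{i\arg\wh{a}}$ and $\mu_{1/4}=\cos(\arg\wh{a})$. The only point deserving a word of care is that $\mu_{1/4}$ must be real and lie in $(-1,1)$: indeed $\mu_{1/4}=\cos(\arg\wh{a})=\wh{a}_0/\abs{\wh{a}}$, with $\arg\wh{a}\in(0,\pi)$ because $\wh{a}_1>0$, so it is the modulus $\abs{\wh{a}}$ that belongs in its denominator.
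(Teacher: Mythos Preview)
Your proposal is correct and follows exactly the same approach as the paper, which simply records $\wh{\alpha}_{1/4}=2\wh{a}$ and $\wh{\beta}_{1/4}=2\wh{a}_0$ and declares the result immediate; your write-up merely spells out the substitution into \eqref{eq:p57} that the paper leaves implicit. Your final remark is also apt: the displayed formula in the statement shows $\mu_{1/4}=\wh{a}_0/\wh{a}$, but what actually follows from \eqref{eq:p57} (and what you derive) is $\mu_{1/4}=\wh{a}_0/|\wh{a}|=\cos(\arg\wh{a})$, which is the real number in $(-1,1)$ that the construction requires.
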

\begin{proof}
Clearly $\wh{\alpha}_{1/4}=2\,\wh{a}$ and $\wh{\beta}_{1/4}=2\,\wh{a}_0$ that implies the result.
\end{proof}
\par Notice that, in the particular case $V_{\infty}=0$, we get $\vk_{1/4}=-i$ and $\mu_{1/4}=0$, thus the formulas for $\mathcal{S}_{\rm ref\,1/4}$ are essentially simplified since the right formula \eqref{eq:p83} and the recurrence relations \eqref{eq:p86a} are reduced to
\begin{equation}
 c_{0\,1/4}=-\left(\frac{|\wh{a}_1|}{2}\right)^{1/2}e^{-i\,\pi/4}\ \ \text{and}\ \
 R^0=1,\ \ R^{2l-1}=0, \ \ R^{2l}=\frac{2l-3}{2l}R^{2(l-1)}\ \ \text{for}\ \ l\geq 1,
\label{sdtbckernel}
\end{equation}
so that $R^2=-\frac12$ and $R^{2l}=-\frac{(2l-3)!!}{(2l)!!}$ for $l\geq 2$.
\begin{remark}\label{rem:p8}
The stability bounds given for the family of schemes with the DTBC \eqref{eq:p31} in \cite{DZZ09} remain valid if one replaces $\mathcal{S}_{\rm ref\,\theta}$ by $\mathcal{S}_{\rm ref\,\theta_0}$ with any $\theta_0\leq\frac14$, in particular, by $\mathcal{S}_{\rm ref\,1/4}$.
\end{remark}
\par We also consider the semi-discrete Crank-Nicolson method for problem \eqref{eq:se}-\eqref{eq:s21}
\begin{gather}
 i\hbar\rho \ov{\pa}_t\Psi=-\frac{\hbar^{\,2}}{2}D(B D\ov{s}_t\Psi)+V\ov{s}_t\Psi\ \
 \text{on}\ \ \mathbb{R}^+\times\omega^{\tau},
\label{eq:p91}\\
 \Psi|_{x=0}=0,\ \ \Psi^0=\psi_0\ \ \text{on}\ \ \mathbb{R}^+,
\label{eq:p92}
\end{gather}
where $\Psi$ is defined on $\overline{\mathbb{R}}^+\times\ov{\omega}^{\,\tau}$ and $\Psi^m(x)\to0$ as $x\to+\infty$ for any $m\geq1$.
\par We write down the corresponding SDTBC allowing to restrict its solution to $[0,X]\times\ov{\omega}^{\,\tau}$ in the form
\begin{equation}
 (\left.D\ov{s}_t\Psi)\right|_{x=X} = \mathcal{S}_{D}\bPsi_{X}\equiv c_{0D}R_D*\Psi_{X}\ \ \text{on}\ \ \omega^{\tau}
\label{eq:p92}
\end{equation}
similar to \eqref{eq:p31}, where $R^0_D=1$ and $\Psi_{X}=\left.\Psi\right|_{x=X}$.
The SDTBCs were previously considered in the slightly different form
$(\left.D\Psi)\right|_{x=X} = \wt{\mathcal{S}}_{D}\bPsi_{X}$ on $\omega^{\tau}$
or in the form of the corresponding Neumann-to-Dirichlet map in \cite{SD95,SY97,YFS01,AB03,AABES08}. Note that they were presented there explicitly only in the particular case $V_{\infty}=0$ when they were simplified essentially.
\par The next result is somewhat unexpected.
\begin{proposition}\label{prop:p10}
The operators $\mathcal{S}_{D}$ and $\mathcal{S}_{\rm ref\,1/4}$
coincide.
\end{proposition}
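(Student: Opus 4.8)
The plan is to compute the temporal symbol (the Z-transform of the convolution kernel) of each of the two operators explicitly and to show that the two symbols coincide as functions of the transform variable $z$; since both $\mathcal S_D$ and $\mathcal S_{\rm ref\,1/4}$ are causal discrete convolutions with $R^0=R^0_D=1$, equality of symbols forces both $c_{0D}=c_{0\,1/4}$ (from the behavior at $z\to\infty$) and $R^m_D=R^m(\vk_{1/4},\mu_{1/4})$ for all $m$ (by matching powers of $1/z$), hence equality of the operators. I would obtain the SDTBC symbol from the standard transparent-boundary derivation and the $\mathcal S_{\rm ref\,1/4}$ symbol from the generating function of $R^m$, and then match the resulting square-root expressions together with their scalar prefactors and branch choices.

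First I would derive $\mathcal S_D$. On $x>X_0$ the coefficients are constant, so applying the Z-transform $\wh\Psi(x,z)=\sum_{m\geq0}\Psi^m(x)z^{-m}$ to the semi-discrete scheme \eqref{eq:p91} (using $\widehat{\ov\pa_t\Psi}=\frac{z-1}{\tau z}\wh\Psi$ and $\widehat{\ov s_t\Psi}=\frac{z+1}{2z}\wh\Psi$, with $\Psi^0=0$ there) reduces it to the ODE $D^2\wh\Psi=\lambda^2\wh\Psi$, where
\[
 \lambda^2=2\wh a_0-2i\wh a_1\,\frac{z-1}{z+1}=\frac{2(\wh a^*z+\wh a)}{z+1}=\frac{2\wh a^*\ll(z+e^{2i\arg\wh a}\rr)}{z+1}.
\]
Selecting the decaying solution $e^{-\lambda x}$ (forced by $\Psi^m(x)\to0$, i.e.\ $\Rea\lambda>0$) gives $\widehat{(D\Psi)_X}=-\lambda\,\wh\Psi_X$, so the SDTBC $\mathcal S_D\bPsi_X=c_{0D}R_D*\Psi_X$ has symbol
\[
 \wh{\mathcal S}_D(z)=-\tfrac{z+1}{2z}\lambda(z)=-\tfrac{\sqrt{2\wh a^*}}{2}\sqrt{\ll(1+\tfrac1z\rr)\ll(1+\tfrac{e^{2i\arg\wh a}}{z}\rr)}.
\]

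For the discrete side the key identity is the generating function of the kernel,
\[
 \sum\nolimits_{m=0}^\infty R^m(\vk,\mu)\,s^m=\sqrt{1-2\vk\mu s+\vk^2 s^2},
\]
which I would get from the recurrence \eqref{eq:p86a}: substituting the three-term relation into $F(s):=\sum_m R^m s^m$ yields the separable ODE $\tfrac{F'}{F}=\tfrac12\,\tfrac{d}{ds}\log(1-2\vk\mu s+\vk^2 s^2)$, whence the stated $F$ by $F(0)=R^0=1$ (equivalently, one may start from \eqref{eq:p54} and the classical Legendre generating function). Inserting the $\theta=\tfrac14$ parameters of Corollary \ref{cor:p8}, $\vk_{1/4}=-e^{i\arg\wh a}$ and $\mu_{1/4}=\cos(\arg\wh a)$, the radicand factors as $(1+s)(1+e^{2i\arg\wh a}s)$, so with $s=1/z$ the operator \eqref{eq:p53} has symbol $c_{0\,1/4}\sqrt{(1+\frac1z)(1+\frac{e^{2i\arg\wh a}}{z})}$. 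Since $\wh a^*=|\wh a|\,e^{-i\arg\wh a}$, the prefactors match: $-\tfrac{\sqrt{2\wh a^*}}{2}=-\ll(\tfrac{|\wh a|}{2}\rr)^{1/2}e^{-i(\arg_0\wh a)/2}=c_{0\,1/4}$ by \eqref{eq:p83}.

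The main obstacle is branch consistency: the root $\lambda$ is pinned by the decay condition, whereas the generating-function root is pinned by $R^0=1$, and one must confirm these choices define the same analytic function. I would settle this by examining $z\to\infty$, where $\lambda\to\sqrt{2\wh a^*}=\sqrt{2|\wh a|}\,e^{-i(\arg_0\wh a)/2}$, which indeed has positive real part since $\arg_0\wh a\in(0,\pi)$; hence $\wh{\mathcal S}_D\to-\tfrac12\sqrt{2\wh a^*}=c_{0\,1/4}$, matching $\wh{\mathcal S}_{\rm ref\,1/4}\to c_{0\,1/4}$ from the normalized square root. This is precisely where the fixed sign of $c_{0\theta}$ secured in Proposition \ref{prop:p5} is used. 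With radicands, prefactors, and branches all matched, the symbols coincide, and therefore so do $\mathcal S_D$ and $\mathcal S_{\rm ref\,1/4}$.
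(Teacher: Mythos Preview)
Your argument is correct and follows essentially the same route as the paper: apply the $\mathcal Z$-transform to the semi-discrete scheme on $x\geq X_0$, obtain the second-order ODE with symbol $\lambda$, select the decaying branch, and identify the resulting square-root symbol with the Legendre generating function $\sqrt{1-2\vk_{1/4}\mu_{1/4}s+\vk_{1/4}^2s^2}$, matching both the scalar prefactor and the branch. The only cosmetic difference is your use of the $\sum_m\Psi^m z^{-m}$ convention (analysis near $z\to\infty$) versus the paper's $\sum_m R^m z^m$ convention (analysis near $z=0$); these are related by $s=1/z$ and lead to the same identification.
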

\begin{proof}
We derive the operator $\mathcal{S}_{D}$ by the approach from \cite{DZ06,DZZ09} to clarify the result. For brevity, we confine ourselves by a formal derivation.
We recall the reproducing function
\[
 r(z)=\mathcal{T}[R](z):=\sum_{m=0}^{\infty}R^mz^m,\ \ z\in\mathbb{C},
\]
of $R$: $\ov{\omega}^{\,\tau}\to \mathbb{C}$ and the inverse transform $R=\mathcal{T}^{-1}[r]$ defined by
$
 R^m=\frac{r^{(m)}(0)}{m!},\ \ m\geq0.
$
\par For $x\geq X_0$, equation \eqref{eq:p91} takes the simpler form
\begin{equation}\label{eq:p101}
 i\hbar\rho_{\infty} \ov{\pa}_t\Psi=-\frac{\hbar^{\,2}}{2}B_{\infty}D^2\ov{s}_t\Psi+V_{\infty}\ov{s}_t\Psi
\end{equation}
(cp. to \eqref{eq:sec}) and also $\Psi^0(x)=0$. Applying the operator $\mathcal{T}$ to this equation with constant coefficients, we get the second order ODE in $x$ with the complex parameter $z$
\[
 i\hbar\rho_{\infty} \frac{1-z}{\tau}\wt{\Psi}+\frac{1+z}{2}\left(\frac{\hbar^{\,2}}{2}B_{\infty}D^2\wt{\Psi}-V_{\infty}\wt{\Psi}\right)=0
\]
for the function $\wt{\Psi}(x,z):=\mathcal{T}[\Psi(x)](z)$. We rewrite it in the canonical form
\[
 D^2\wt{\Psi}-\lambda(z)\wt{\Psi}=0\ \ \text{with}\ \ \lambda(z)=2\frac{az+a^*}{z+1}.
\]
\par Its solution such that $\wt{\Psi}(x,z)\to\infty$ as $x\to+\infty$ has the form
\[
 \wt{\Psi}(x,z)=\wt{\Psi}(X,z)\exp\left\{\sqrt[(-)]{\lambda(z)}(x-X)\right\}\ \ \text{for}\ \ x\geq X,
\]
where $\sqrt[(-)]{\cdot}$ is the branch of $\sqrt{\cdot}$ with the negative real part. Then
\[
 (D\ov{s}_t\wt{\Psi})(X,z) = \frac{1+z}{2}\sqrt[(-)]{\lambda(z)}\,\wt{\Psi}(X,z)
\]
and according to \eqref{eq:p92} consequently
\[
 \mathcal{S}_{D}\bPsi_{X}=\mathcal{T}^{-1}\left[\frac{1+z}{2}\sqrt[(-)]{\lambda(z)}\,\wt{\Psi}(X,z)\right]=c_{0D}R_D*\Psi_{X}
 \ \ \text{with}\ \
 c_{0D}=\mathcal{T}^{-1}\left[\frac{1+z}{2}\sqrt[(-)]{\lambda(z)}\right].
\]
\par Similarly to \cite{DZZ09}, for $z$ small enough, we have
\[
 \frac{1+z}{2}\sqrt[(-)]{\lambda(z)}=\frac{1}{2}\sqrt[(-)]{\lambda(0)}\,\sqrt[+]{\frac{a}{a^*}z^2+2\frac{a_0}{a^*}z+1},
\]
where $\sqrt[+]{\cdot}$ is the analytic branch of $\sqrt{\cdot}$ in the disk $\{\abs{z-1}<1\}$ such that $\sqrt[+]{1}=1$. Clearly
\[
 c_{0D}=\frac{1}{2}\sqrt[(-)]{\lambda(0)}=-\left(\frac{\abs{\wh{a}}}{2}\right)^{1/2}e^{-i(\arg_0\wh{a})/2}=c_{0\, 1/4},
\]
see \eqref{eq:p83}, and
\[
 R_D=\mathcal{T}^{-1}\left[\sqrt[+]{(\vk_{1/4}z)^2-2\mu_{1/4}\vk_{1/4}z+1}\,\right]=R(\vk_{1/4}, \mu_{1/4})
\]
according to \cite{DZZ09} since $\frac{a}{a^*}=\vk^2_{1/4}$ and $\frac{a_0}{a^*}=-\vk_{1/4}\mu_{1/4}$. The proof is complete.
\end{proof}
\par Now we can study closeness for the kernels of the operators $\mathcal{S}_{\rm ref\,\theta}$ and $\mathcal{S}_{D}$.
\begin{proposition}\label{prop:p12}
The following bound holds
\begin{equation}\label{eq:p121}
 \abs{c_{0\theta}R^m(\vk_{\theta},\mu_{\theta})-c_{0D}R_D^m}\leq
 \left(\frac{3\sqrt{2}}{\abs{\wt{\alpha}_{\theta}}}
 +\frac{1}{\abs{2m-1}(\abs{\wt{\alpha}_\theta}^{1/2}+\sqrt{2})}\right)
 (1-4\theta)h^2\abs{\wh{a}}^{3/2}
\end{equation}
for $m\geq0$ (recall that $\wt{\alpha}_{\theta}=2+(1-4\theta)h^2\,\wh{a}$).
In particular
\begin{equation}\label{eq:p122}
 \sup_{m\geq0}\abs{c_{0\theta}R^m(\vk_{\theta},\mu_{\theta})-c_{0D}R_D^m}=O\left((1-4\theta)\frac{h^2}{\tau^{3/2}}\right)\ \
 \text{as}\ \ (1-4\theta)\frac{h^2}{\tau}\to 0\ \ \text{and}\ \ \tau\leq\tau_0.
\end{equation}
\end{proposition}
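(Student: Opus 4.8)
The plan is to reduce everything to a comparison between the $\theta$-kernel and the $\tfrac14$-kernel, and then to work at the level of the reproducing functions. First I would invoke Proposition \ref{prop:p10}, which gives $c_{0D}R_D^m = c_{0\,1/4}R^m(\vk_{1/4},\mu_{1/4})$, so that \eqref{eq:p121} becomes a bound on $\abs{c_{0\theta}R^m(\vk_{\theta},\mu_{\theta})-c_{0\,1/4}R^m(\vk_{1/4},\mu_{1/4})}$. The key preliminary is a factorized form of the reproducing function $G_{\theta}(z):=\sum_{m\ge0}c_{0\theta}R^m(\vk_{\theta},\mu_{\theta})z^m = c_{0\theta}\sqrt{(\vk_{\theta} z)^2-2\mu_{\theta}\vk_{\theta} z+1}$ (with $\mathcal{T}$ from the proof of Proposition \ref{prop:p10}). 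Using the factorization of the radicand from the proof of Corollary \ref{cor:p7a}, together with \eqref{eq:p7a1} and the explicit $c_{0\theta}$ from \eqref{eq:p57}, I would absorb the modulus and phase of $c_{0\theta}$ into the two square-root factors to obtain
\[
 G_{\theta}(z)=-\tfrac12\sqrt{\wh{a}\,z+\wh{a}^*}\;\sqrt{\wt{\alpha}_{\theta}\,z+\wt{\alpha}_{\theta}^*}.
\]
The point is that all the $\theta$- and $h$-dependence is now isolated in the single factor $\sqrt{\wt{\alpha}_{\theta}\,z+\wt{\alpha}_{\theta}^*}$, while for $\theta=\tfrac14$ one has $\wt{\alpha}_{1/4}=2$, so $G_{1/4}(z)=-\tfrac12\sqrt{\wh{a}\,z+\wh{a}^*}\,\sqrt{2z+2}$, consistent with Proposition \ref{prop:p10}.

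Next I would subtract and rationalize. With $\ve:=(1-4\theta)h^2\wh{a}=\wt{\alpha}_{\theta}-2$, so that $\abs{\ve}=(1-4\theta)h^2\abs{\wh{a}}$, one gets
\[
 G_{\theta}(z)-G_{1/4}(z)
 =-\tfrac12\,\frac{(\ve z+\ve^*)\sqrt{\wh{a}\,z+\wh{a}^*}}
 {\sqrt{\wt{\alpha}_{\theta}\,z+\wt{\alpha}_{\theta}^*}+\sqrt{2z+2}}
 =:-\tfrac12(\ve z+\ve^*)\,W(z).
\]
Rationalizing is essential: it extracts the explicit factor $\ve$ (hence the factor $(1-4\theta)h^2\abs{\wh{a}}$), while the remaining $\sqrt{\wh{a}\,z+\wh{a}^*}$ contributes the extra $\abs{\wh{a}}^{1/2}$, together producing the $(1-4\theta)h^2\abs{\wh{a}}^{3/2}$ in \eqref{eq:p121}. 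Since the $m$-th Taylor coefficients satisfy $c_{0\theta}R^m(\vk_{\theta},\mu_{\theta})-c_{0\,1/4}R^m(\vk_{1/4},\mu_{1/4}) = -\tfrac12\bigl(\ve\,[W]_{m-1}+\ve^*\,[W]_m\bigr)$ (with $[W]_{-1}:=0$), it remains to bound the coefficients $[W]_m$ of $W$.

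The main obstacle is precisely this uniform-in-$m$ coefficient estimate, and matching the two explicit terms in \eqref{eq:p121}. Here I would use that $W$ is, up to the factor $\sqrt{\wh{a}\,z+\wh{a}^*}=(\wh{a}^*)^{1/2}\sqrt{1+e^{i2\arg\wh{a}}z}$ carrying the modulus $\abs{\wh{a}}^{1/2}$, a ratio whose denominator equals $(\wt{\alpha}_{\theta}^*)^{1/2}+\sqrt2$ at $z=0$ and whose only singularities are the square-root branch points on the unit circle. I expect the bound to split into a low-order contribution, giving the $m$-independent term $\tfrac{3\sqrt2}{\abs{\wt{\alpha}_{\theta}}}(1-4\theta)h^2\abs{\wh{a}}^{3/2}$, and a decaying contribution inheriting the factor $\tfrac{1}{2m-1}$ from the Legendre structure of $R^m$ in \eqref{eq:p54} and the denominator value $\abs{\wt{\alpha}_{\theta}}^{1/2}+\sqrt2$ from $\sqrt{\wt{\alpha}_{\theta}\,z+\wt{\alpha}_{\theta}^*}+\sqrt{2z+2}$; note in particular that $\abs{\wt{\alpha}_{\theta}^{1/2}-\sqrt2}=\abs{\wt{\alpha}_{\theta}-2}/\abs{\wt{\alpha}_{\theta}^{1/2}+\sqrt2}$, which is what makes the denominator $\abs{\wt{\alpha}_{\theta}}^{1/2}+\sqrt2$ of the second term natural. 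Pinning down the precise constants $3\sqrt2$ and $\abs{\wt{\alpha}_{\theta}}^{1/2}+\sqrt2$ uniformly in $m$ is the delicate part, to be handled either through the explicit representation \eqref{eq:p54} with mean-value estimates in $\mu$ and in the modulus and phase of $\wt{\alpha}_{\theta}$, or through a Cauchy-integral bound on $[W]_m$ exploiting the branch-point locations.

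Finally, \eqref{eq:p122} would follow from \eqref{eq:p121} by taking the supremum over $m\ge0$ (the $m$-dependent term is maximal at $m=0$, where $\abs{2m-1}=1$) and then substituting $\abs{\wh{a}}\asymp\wh{a}_1=\tfrac{2\rho_{\infty}}{\tau\hbar B_{\infty}}\asymp\tau^{-1}$, valid for $\tau\le\tau_0$ and bounded $\abs{\wh{a}_0}/\wh{a}_1=\tau\abs{V_{\infty}}/(\hbar\rho_{\infty})$ as in the left condition of \eqref{eq:p7d1}. This turns $(1-4\theta)h^2\abs{\wh{a}}^{3/2}$ into $O\bigl((1-4\theta)h^2\tau^{-3/2}\bigr)$ as $(1-4\theta)h^2/\tau\to0$, since then also $\abs{\wt{\alpha}_{\theta}}=\abs{2+\ve}\to2$ keeps the bracket in \eqref{eq:p121} bounded.
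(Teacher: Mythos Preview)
Your reduction via Proposition~\ref{prop:p10} and the derivation of \eqref{eq:p122} from \eqref{eq:p121} are both correct and match the paper. The generating-function factorization
\[
 G_{\theta}(z)=-\tfrac12\sqrt{\wh{a}\,z+\wh{a}^*}\;\sqrt{\wt{\alpha}_{\theta}\,z+\wt{\alpha}_{\theta}^*}
\]
is valid and is a genuinely different organizing principle from the paper's proof. The paper never touches the reproducing function here; it works term-by-term with the explicit Legendre formula
\[
 c_{0\theta}R^m(\vk_{\theta},\mu_{\theta})=\frac{(-1)^m\abs{\wh{\alpha}_{\theta}}^{1/2}}{2(2m-1)}e^{i(m-1/2)\arg_0\wh{\alpha}_{\theta}}\bigl[P_m(\mu_{\theta})-P_{m-2}(\mu_{\theta})\bigr],
\]
splits the difference into a ``phase/modulus'' part and a ``$\mu$-variation'' part, and closes each with the elementary facts $\max_{[-1,1]}|P_m|\le 1$, $\tfrac{1}{2m-1}[P_m-P_{m-2}]'=P_{m-1}$, and $|\sin k\lambda|\le|k||\sin\lambda|$. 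These three inputs are exactly what produce the explicit constants $3\sqrt{2}/|\wt{\alpha}_{\theta}|$ and $1/\bigl(|2m-1|(|\wt{\alpha}_{\theta}|^{1/2}+\sqrt{2})\bigr)$ in \eqref{eq:p121}.

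The gap in your plan is precisely where you flag it: the uniform-in-$m$ bound on $[W]_m$ with the \emph{stated} constants. Your rationalization cleanly extracts $(1-4\theta)h^2|\wh{a}|^{3/2}$, but a Cauchy-integral estimate for the coefficients of $W(z)=\sqrt{\wh{a}z+\wh{a}^*}\big/\bigl(\sqrt{\wt{\alpha}_{\theta}z+\wt{\alpha}_{\theta}^*}+\sqrt{2z+2}\bigr)$ will not readily yield the specific $3\sqrt2$ and $|\wt{\alpha}_{\theta}|^{1/2}+\sqrt2$ of \eqref{eq:p121}; and your alternative (``mean-value estimates in $\mu$ and in the modulus and phase of $\wt{\alpha}_{\theta}$'') is, once unpacked, the paper's argument. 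So your route buys a cleaner structural picture and an immediate explanation of the $(1-4\theta)h^2|\wh{a}|^{3/2}$ scaling, while the paper's route buys the sharp explicit bracket in \eqref{eq:p121} with almost no analytic machinery. If you want to keep your setup and still land on the exact constants, the shortest completion is to abandon the Cauchy bound for $[W]_m$ and instead pass back to the Legendre representation at the last step, i.e.\ use the paper's two Legendre identities on the already-isolated pieces.
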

\begin{proof}
Clearly
\[
 c_{0\theta}R^m(\vk_{\theta},\mu_{\theta})=\frac{(-1)^m\abs{\wh{\alpha}_{\theta}}^{1/2}}{2(2m-1)}e^{i(m-1/2)\arg_0\wh{\alpha}_{\theta}}
 \left[P_m(\mu_{\theta})-P_{m-2}(\mu_{\theta})\right].
\]
Therefore
\begin{gather*}
 r^m_{\theta}:=\abs{c_{0\theta}R^m(\vk_{\theta},\mu_{\theta})-c_{0\,1/4}R^m(\vk_{1/4},\mu_{1/4})}\leq\frac{\rho^m_{\theta}}{2}\abs{P_m(\mu_{\theta})-P_{m-2}(\mu_{\theta})}
\\
 +\frac{|\wh{\alpha}_{1/4}|^{1/2}}{2}
 \abs{\frac{1}{2m-1}\left[P_m(\mu_{\theta})-P_{m-2}(\mu_{\theta})\right]
     -\frac{1}{2m-1}\left[P_m(\mu_{1/4})-P_{m-2}(\mu_{1/4})\right]}
\end{gather*}
with
\[
 \rho^m_{\theta}:=\frac{1}{\abs{2m-1}}\abs{
    \abs{\wh{\alpha}_{\theta}}^{1/2}e^{i(m-1/2)\arg_0\wh{\alpha}_{\theta}}
   -\abs{\wh{\alpha}_{1/4}}^{1/2}e^{i(m-1/2)\arg_0\wh{\alpha}_{1/4}}
 }.
\]
\par The Legendre polynomials have the properties
\[
 \max_{[-1,1]}\abs{P_m(\mu)}\leq1,\ \ \frac{1}{2m-1}\left[P_m(\mu)-P_{m-2}(\mu)\right]'=P_{m-1}(\mu)\ \ \text{for}\ \ m\geq 0,
\]
for example, see \cite{BE53}.
Consequently
\begin{equation}\label{eq:p133}
 r^m_{\theta}\leq\rho^m_{\theta}+\frac{\abs{\wh{\alpha}_{1/4}}^{1/2}}{2}\abs{\mu_{\theta}-\mu_{1/4}}.
\end{equation}
Notice that owing to \eqref{eq:p7a1} and $\wh{\alpha}_{1/4}=2\wh{a}$ we get
\[
 \rho^m_{\theta}\leq\frac{1}{\abs{2m-1}}\left(
 \abs{2\wh{a}}^{1/2}\abs{e^{i(m-1/2)\arg_0\wt{\alpha}_{\theta}}-1}+\abs{\abs{\wh{\alpha}_{\theta}}^{1/2}-\abs{2\wh{a}}^{1/2}}
 \right).
\]
Exploiting the relations
\[
 \abs{e^{i\lambda}-1}=2\abs{\sin\frac{\lambda}{2}},\ \
 \abs{\sin k\lambda}\leq\abs{k}\abs{\sin\lambda}\ \ \text{for real}\ \lambda\ \ \text{and integer}\ k,
\]
we further obtain
\begin{gather*}
 \rho^m_{\theta}\leq\frac{\abs{\wh{a}}^{1/2}}{\abs{2m-1}}\left(2\sqrt{2}\abs{2m-1}\sin\frac{\arg_0\wt{\alpha}_{\theta}}{4}
+\frac{\abs{\abs{\wt{\alpha}_{\theta}}-\abs{2}}}{\abs{\wt{\alpha}_{\theta}}^{1/2}+\sqrt{2}}
 \right)
\\
\leq\abs{\wh{a}}^{1/2}\left(\sqrt{2}\abs{e^{i\arg\wt{\alpha}_{\theta}}-1}
+\frac{(1-4\theta)h^2\abs{\wh{a}}}{\abs{2m-1}(\abs{\wt{\alpha}_{\theta}}^{1/2}+\sqrt{2})}
 \right).
\end{gather*}
Next we have
\begin{gather*}
 \abs{e^{i\arg\wt{\alpha}_{\theta}}-1}=\abs{\frac{\wt{\alpha}_{\theta}}{\abs{\wt{\alpha}_{\theta}}}-1}
\leq\frac{2(1-4\theta)h^2\abs{\wh{a}}}{\abs{\wt{\alpha}_{\theta}}},
\\
 \abs{\mu_{\theta}-\mu_{1/4}}
=\frac{\abs{\wh{a}_0(2-\abs{\wt{\alpha}_{\theta}})+(1-4\theta)h^2\abs{\wh{a}}^2}}{\abs{\wh{a}}\abs{\wt{\alpha}_{\theta}}}
\leq\frac{(1-4\theta)h^2(\abs{\wh{a}_0}+\abs{\wh{a}})}{\abs{\wt{\alpha}_{\theta}}}.
\end{gather*}
Using the last three bounds in \eqref{eq:p133} and recalling Proposition \ref{prop:p10}, we derive bound \eqref{eq:p121}. Also $\wh{a}=O(\frac{1}{\tau})$ and $\wt{\alpha}_{\theta}=2+o(1)$ as $(1-4\theta)\frac{h^2}{\tau}\to0$ and $\tau\leq\tau_0$ that implies \eqref{eq:p122}.
\end{proof}
According to \eqref{eq:p122}, in particular, $\sup_{m\geq0}\abs{c_{0\theta}R^m(\vk_{\theta},\mu_{\theta})-c_{0D}R_D^m}$ is of order $O(h^2)$ for fixed $\tau$ and $h\to0$, or tends to 0 as $\tau\to0$ and $h=o(\tau^{3/2})$.
\begin{remark}\label{rem:p15}
Notice that bound \eqref{eq:p121} is exact enough even for $m=0$ since
\[
 c_{0\theta}-c_{0D}\sim c_{0D}\frac{1-4\theta}{4}h^2\wh{a}^{\,*}\ \ \text{as}\ \ (1-4\theta)h^2|\wh{a}|\to 0
\]
(recall that $|c_{0D}|=\bigl(\frac{|\wh{a}|}{2}\bigr)^{1/2}$).
\end{remark}
\section{Numerical experiments}
\label{NE}
In this section we present the interesting results of numerical experiments on replacing the discrete convolution in time in the DTBC by the corresponding one from the SDTBC.
We consider the initial-boundary value problem \eqref{eq:se}-\eqref{eq:ic} for the Schr\"{o}dinger equation with the constant coefficients $\rho(x)\equiv 1$, $B(x)\equiv 2$, $V(x)\equiv 0$ and the scaled $\hbar=1$.
\par We also exploit the finite uniform meshes $x_j=jh$, $0\leq j\leq J$, with $h=\frac{X}{J}$ and $t_m=m\tau$, $0\leq m\leq M$, with $\tau=\frac{T}{M}$. To apply the SDTBC, we discretize \eqref{eq:p92} mainly similarly to \eqref{eq:p31} replacing ${\mathcal S}_{\rm ref\,\theta}$ by $\mathcal{S}_D$. But according to Remark \ref{rem1}, this reduces the total approximation order of the Numerov scheme, i.e., for $\theta=\frac{1}{12}$, so, in this case, below we also exploit the improved SDTBC (ISDTBC) combining the left-hand side of \eqref{eq:p31} for $\theta=\frac16$ together with its right-hand one for $\theta=\frac14$. Looking ahead, we will see that this change really improves the accuracy.
\par We rely upon the well-known exact solution
\[
 \psi(x,t)=\psi_G(x,t)\equiv \frac{1}{\sqrt[+]{1+i\,\frac{t}{\alpha}}}
 \exp\left\{ik(x-x^{(0)}-kt)-\frac{(x-x^{(0)}-2kt)^2}{4(\alpha+it)}\right\}
\]
(the Gaussian wave package), with the real parameters $k$ (the wave number), $\alpha>0$ and $x^{(0)}$. Then
\begin{equation}
\psi^0(x)=\psi_G(x,0)
=\exp\left\{ik(x-x^{(0)})-\frac{(x-x^{(0)})^2}{4\alpha}\right\}.
\label{s62}
\end{equation}
Though $\psi_G(0,t)$ and $\psi_G(x,0)$ are non-zero, below they both are small enough for any $t\geq 0$ and $x\geq X$.
\par We choose the parameters $k=100$ (that is rather high), $\alpha=\frac{1}{120}$ and $x^{(0)}=0.8$ together with $X=1.5$
and $T=0.006$ (taken in several previous papers including \cite{EA01,DZZ09,ZZ12}).
On Figure~\ref{fig:EX01:Initial+SolutionNorm} we give the modulus and the real part of the initial function and $L^2$-norm and $C$ (i.e., the uniform) one over $[0,X]$ of the solution in dependence with time.
The wave package is moving to the right and, for $T=0.006$, is almost leaving the computational domain, and thus the norms decrease abruptly.
\begin{figure}[htbp]
    \centering{
    \begin{minipage}[h]{0.49\linewidth}
        \center{\includegraphics[width=1\linewidth]{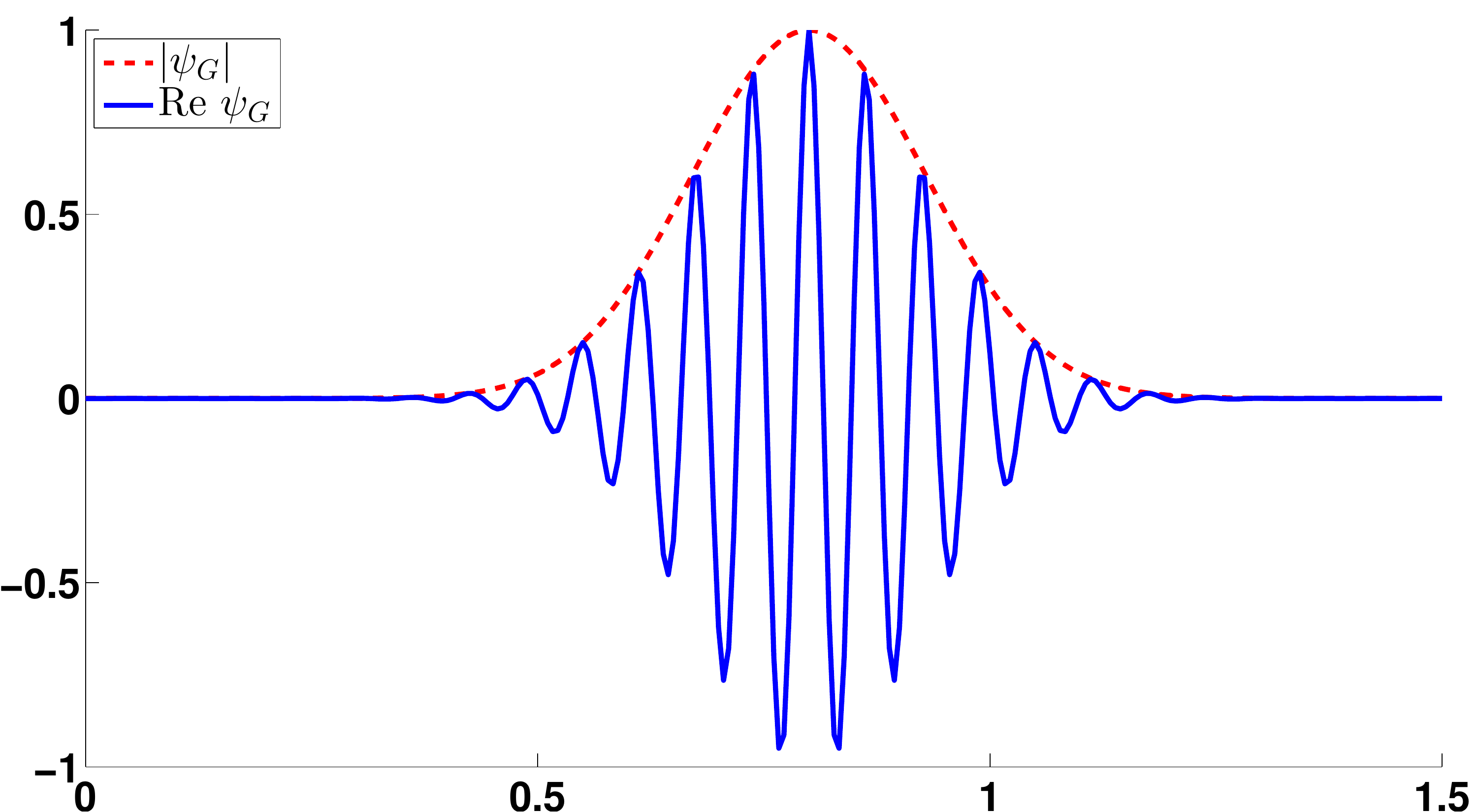}}
    \end{minipage}
    \ \ \ \ \ 
    \begin{minipage}[h]{0.29\linewidth}
        \center{\includegraphics[width=1\linewidth]{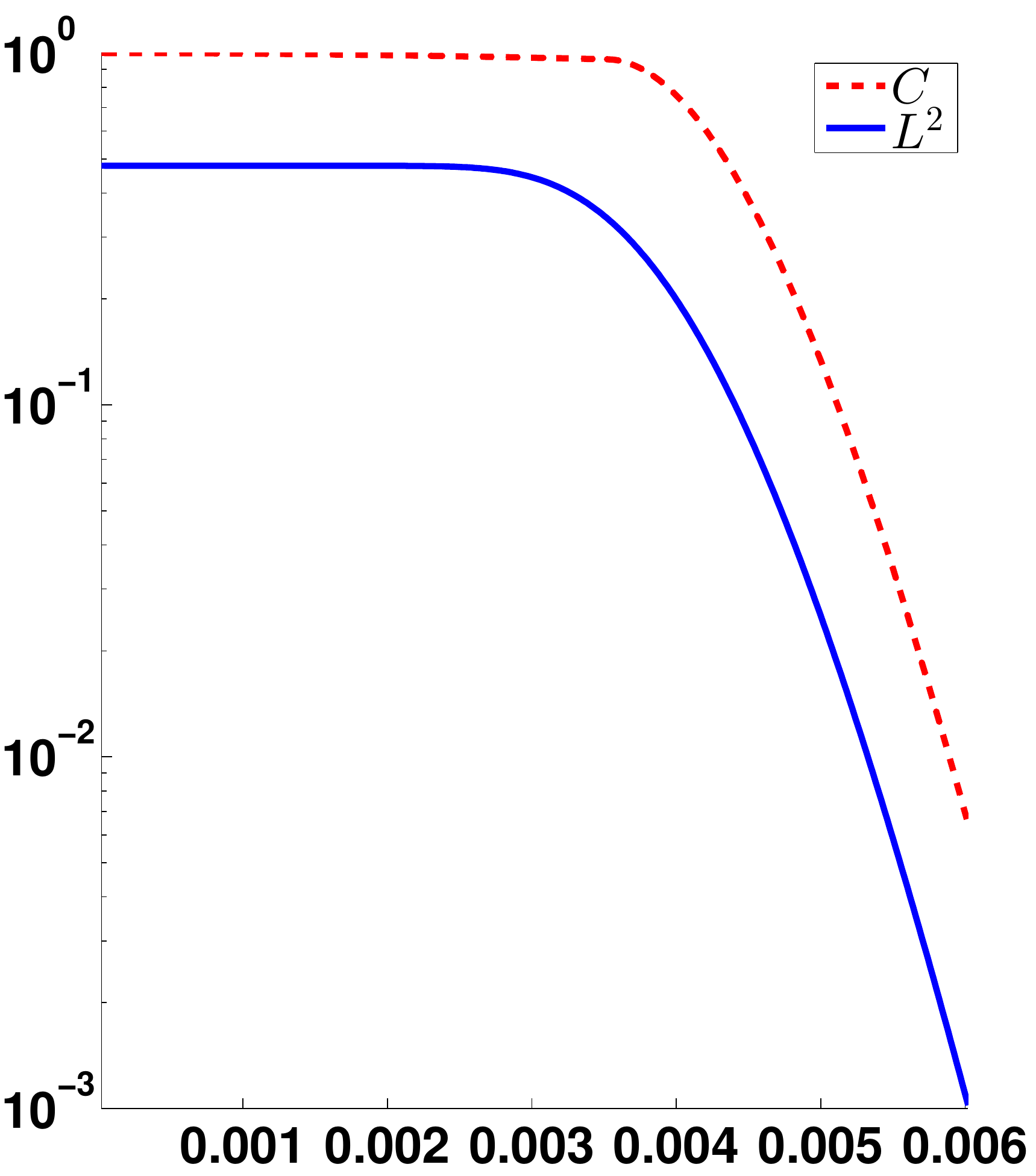}}
    \end{minipage}
    }
    \caption{\small{The modulus and the real part of the initial function $\psi_G(x,0)$ (left) and $L^2$ and $C$ norms of the solution $\psi_G$ in dependence with time (right)}}
    \label{fig:EX01:Initial+SolutionNorm}
\end{figure}
\par We compute the numerical solutions using the DTBC and the SDTBC for various $J$ and $M$ as well as $\theta$.
We first take $\theta=\frac{1}{12}$, $J=800$ and $M=3000$ and on Figure \ref{fig:EX01:DTBCvsSDTBC:Error} see that at the initial stage of  computing the behavior of both absolute and relative errors is the same in the DTBC and the SDTBC cases. But when the wave package is leaving the domain, in the DTBC case, the absolute errors decrease abruptly and the relative errors decrease slightly whereas, in the SDTBC case, the absolute errors stabilize and the relative errors increase significantly, reaching their high maximum values at the final computation moment $T$. The last behavior is rather typical. We emphasize that though both numerical solutions have reasonable absolute errors, the difference between the exploited discrete convolution kernels is significant that one clearly observes from Figure~\ref{fig:EX01:DTBCvsSDTBC:Error} where their modules are shown (notice carefully that, in the SDTBC kernel, zero elements for odd $m$, see \eqref{sdtbckernel}, are omitted). This means that an averaging effect plays the important role.
\begin{figure}[htbp]
    \centering{
    \begin{minipage}[h]{0.49\linewidth}
        \center{\includegraphics[width=1\linewidth]{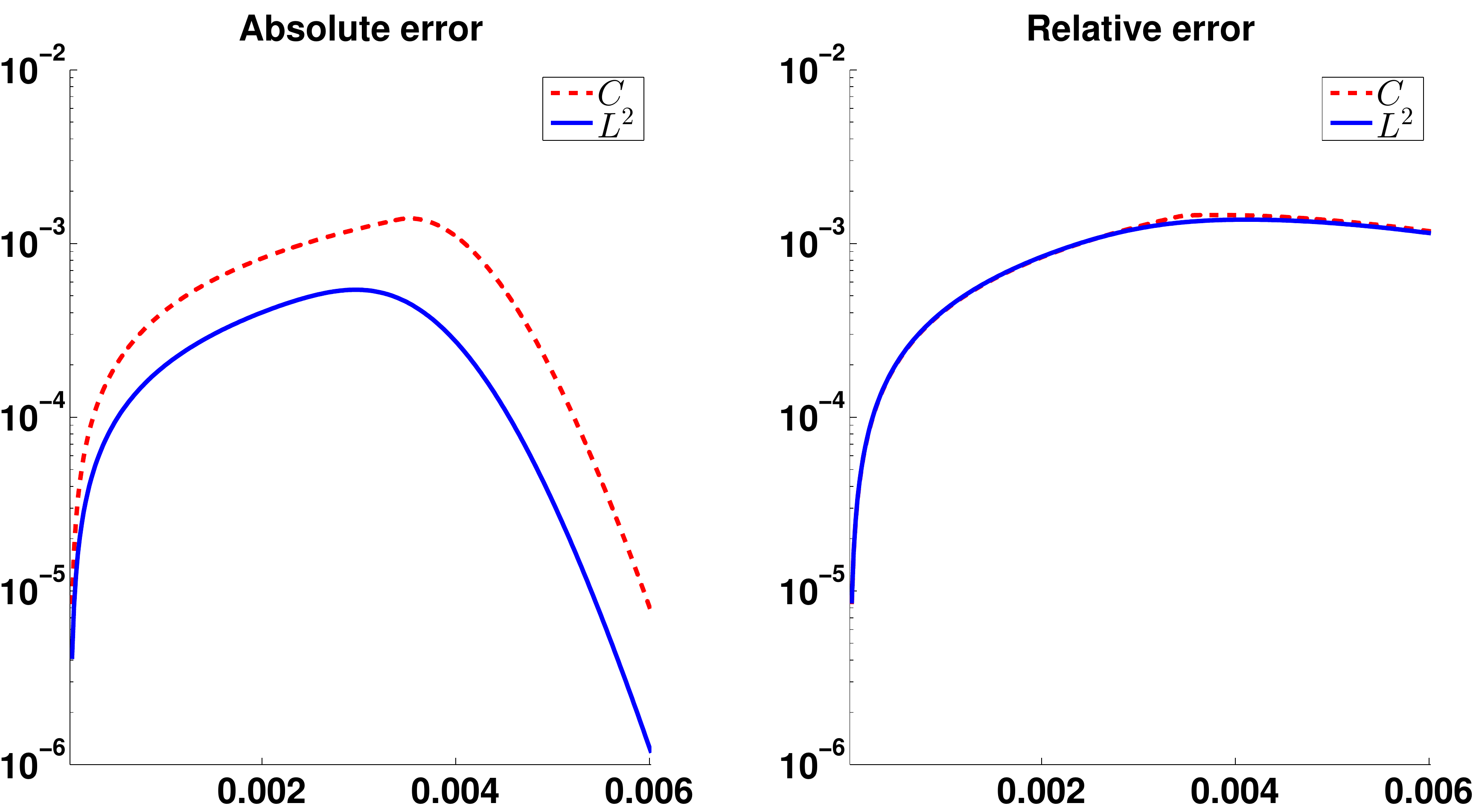}}
    \end{minipage}\\[3mm]
    \begin{minipage}[h]{0.49\linewidth}
        \center{\includegraphics[width=1\linewidth]{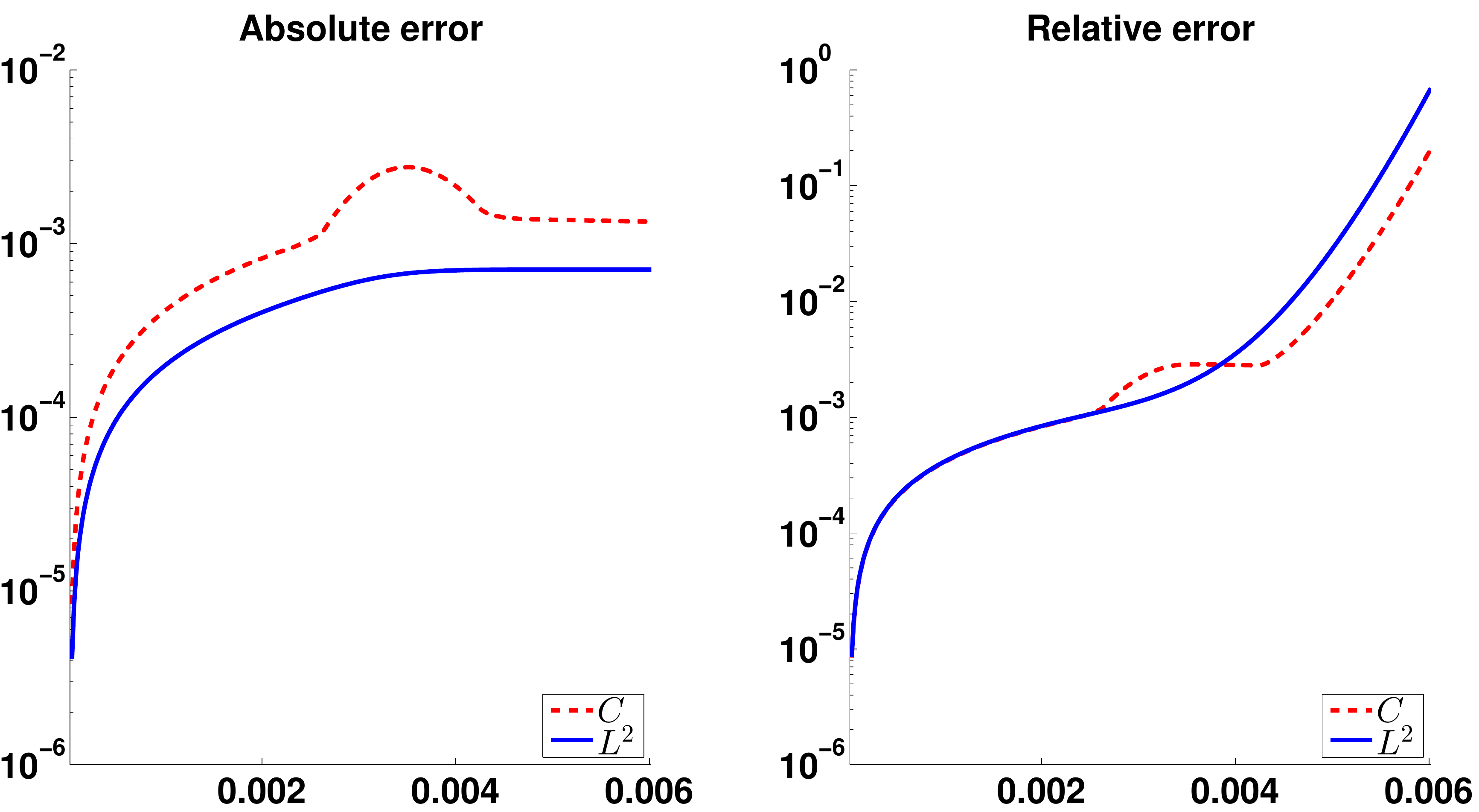}}\\
    \end{minipage}
    }
    \caption{\small The absolute and relative errors for the numerical solutions using the DTBC (upper) and the SDTBC (lower) for $\theta=\frac{1}{12}$ in dependence with time, for $J=800$ and $M=3000$}
    \label{fig:EX01:DTBCvsSDTBC:Error}
\end{figure}
\begin{figure}[htbp]
    \begin{minipage}[h]{0.49\linewidth}
        \center{\includegraphics[width=1\linewidth]{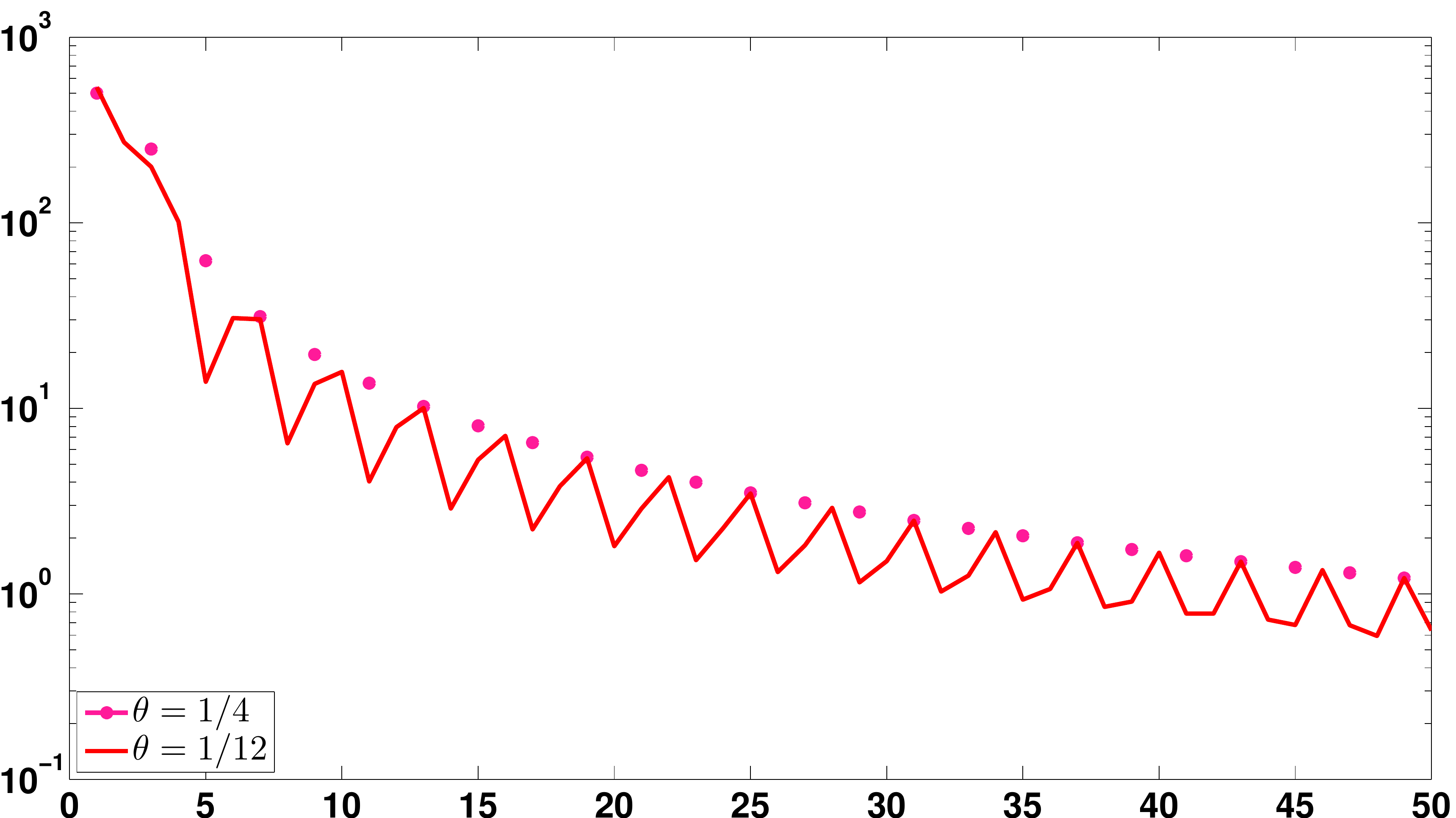}} \end{minipage}
    \begin{minipage}[h]{0.49\linewidth}
        \center{\includegraphics[width=1\linewidth]{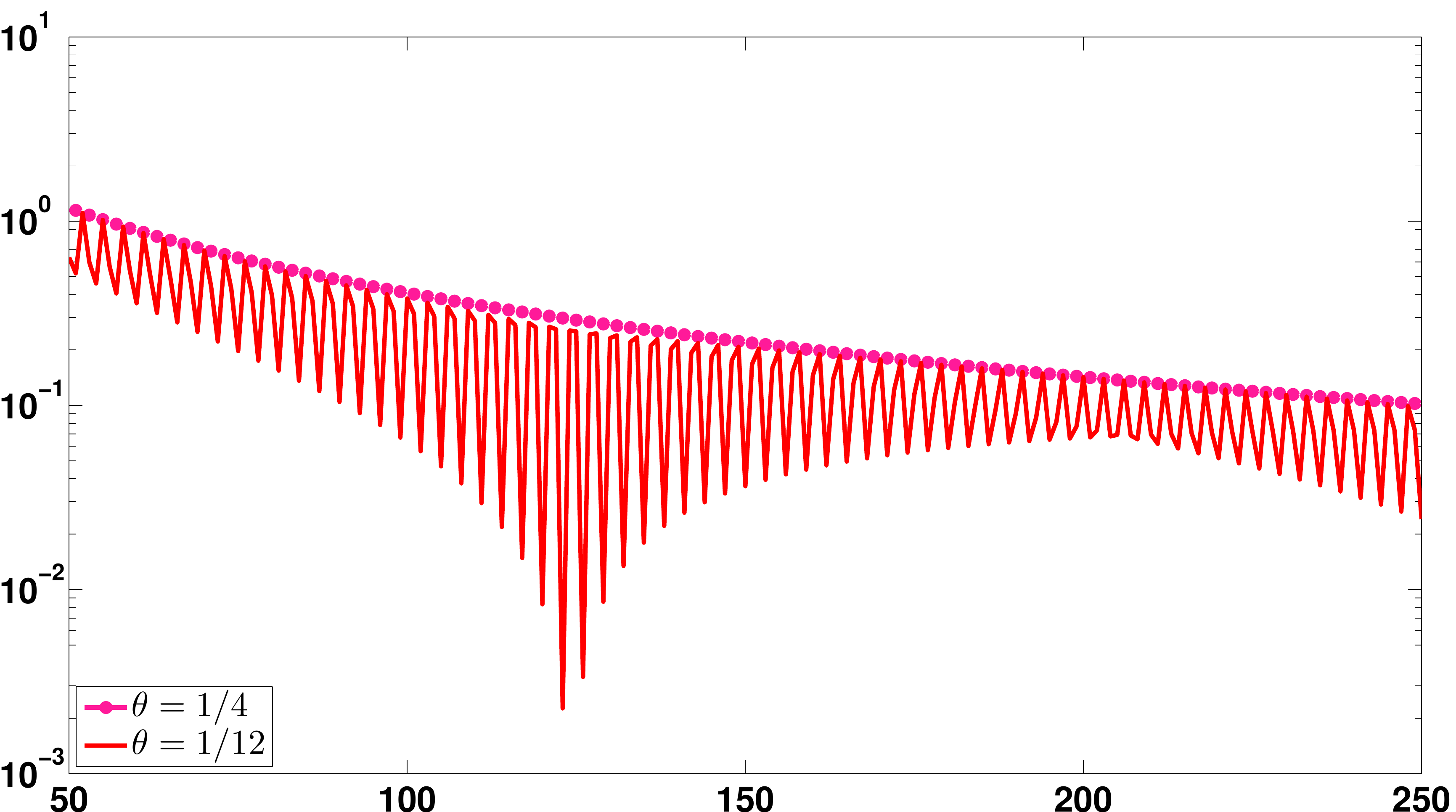}} \end{minipage}\
    \caption{\small The modules of the discrete convolution kernels $\abs{c_{0\theta}R_{\theta}^m}$, for $1\leq m\leq 50$ (left) and $50\leq m\leq 250$ (right), for $\theta=\frac{1}{12}$ and $\frac{1}{4}$ (in the latter case, zero elements for odd $m$ are omitted), and for $J=800$ and $M=3000$}
\label{fig:EX01:Kernel:M=3000}
\end{figure}
\par For $\theta=\frac{1}{12}$ and $M=6000$, in Table \ref{tab:EX01:DTBC:J} we present various errors for the numerical solutions using the DTBC (the upper table), the SDTBC (the middle table) and the ISDTBC (the lower table): the absolute maximum in time $L^2$-errors $E_{L^2}$, the absolute maximum in time $C$-errors $E_C$ and the associated maximum in time relative errors $E_{L^2,{\rm rel}}$ and $E_{C,{\rm rel}}$ together with their ratios as $J$ increases.
Comparing the results in the DTBC and the SDTBC cases, the latter absolute errors are higher but at the same level whereas the latter relative errors are much more higher.
In the DTBC case, for moderate values $J=400$ and $800$, we fix higher orders of decreasing for both the absolute and relative errors (notice that $R_{L^2}>6$ and $R_C>6$ there) whereas, in the case of the SDTBC, we can do that only for the absolute errors, moreover, for $J=800$, only $R_C$ is close to 5.
Also in the DTBC case, for larger values $J=1600$ and $1200$, the error decreasing orders become low because the value of $M$ is not sufficiently large. In the SDTBC case, the absolute $L^2$-error decreasing order is close to 2 (since $R_{L^2}\approx 4$) for $J=800$ and $1600$, while the relative error decreasing orders are very close to 2 for any $J$.
Notice in addition that the absolute and relative differences of the numerical solutions using the DTBC and the SDTBC all demonstrate the second decreasing order (we omit the corresponding table).
\par Passing to the ISDTBC clearly improves the absolute errors almost to their values in the DTBC case and remarkably improves the relative errors demonstrating their higher decreasing order close to 3 now (clearly bringing us to Remark \ref{rem1} once again).
\par On Figure \ref{fig:EX01:DTBC:MaxAbsError:M=3000}, we give the maximum in time \textit{absolute} $L^2$ and $C$ errors for various $\theta$ in dependence with $J=200,400,800,1600$ and $3200$, for $M=3000$.
For $\theta=0,\frac{1}{6},\frac{1}{4}$, the results are close in both the DTBC and the SDTBC cases, and the errors are maximal for $\theta=\frac{1}{4}$ whereas they are very close for $\theta=0$ and $\frac{1}{6}$ (except $J=3200$).
For $\theta=\frac{1}{12}$, the errors are significantly smaller than for the previous values of $\theta$, and in the DTBC case they are smaller compared to the SDTBC one. But passing to the ISDTBC makes the last mentioned errors very close to the DTBC case.
\begin{figure}[htbp]
    \begin{minipage}[h]{0.5\linewidth}\center{
        \includegraphics[width=1\linewidth]{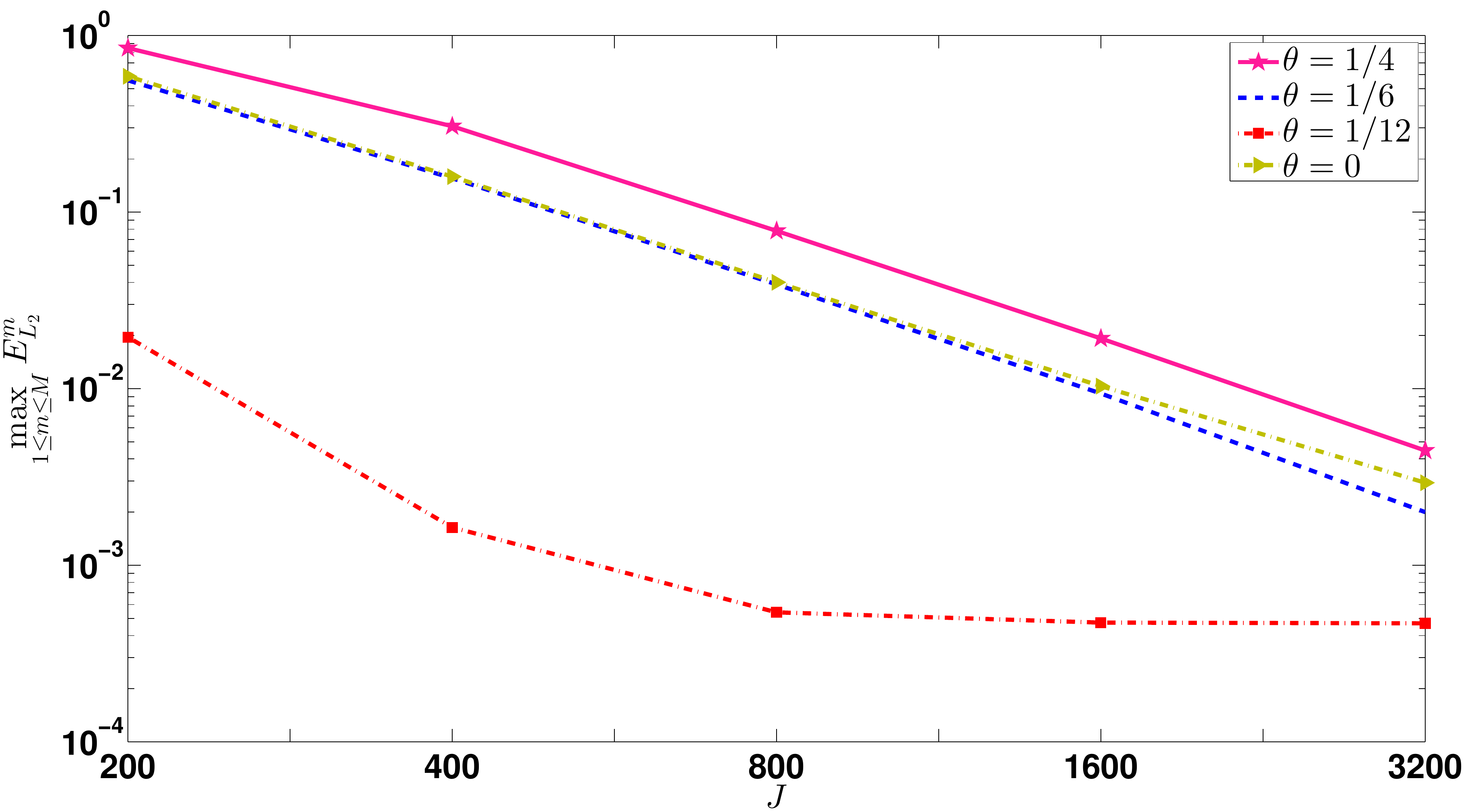}
    } \footnotesize{(a) in $L^2$-norm (DTBC)} \\
    \end{minipage}\hfill
    \begin{minipage}[h]{0.5\linewidth}\center{
        \includegraphics[width=1\linewidth]{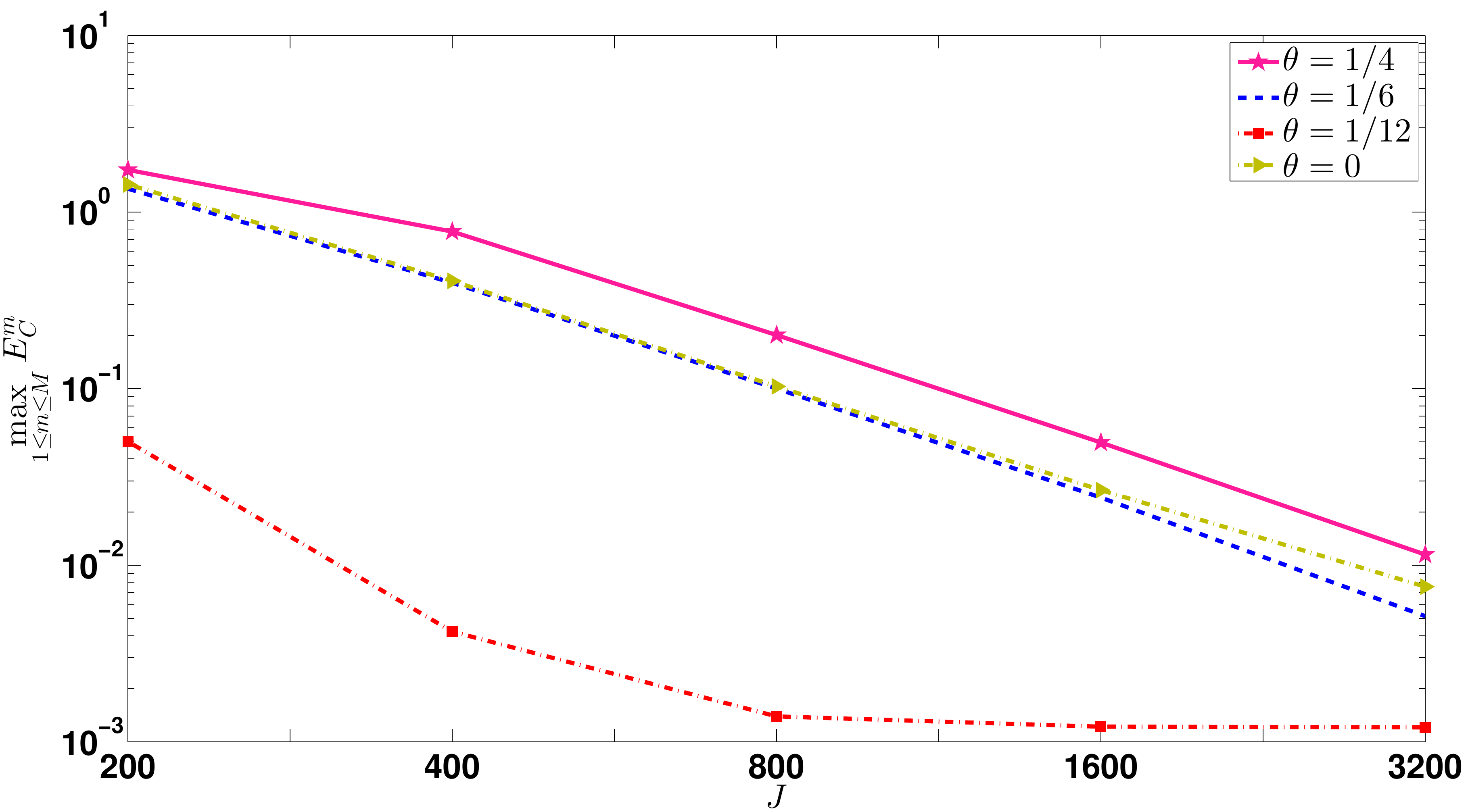}
    } \footnotesize{(b) in $C$-norm (DTBC)} \\
    \end{minipage}
    \begin{minipage}[h]{0.5\linewidth}\center{
        \includegraphics[width=1\linewidth]{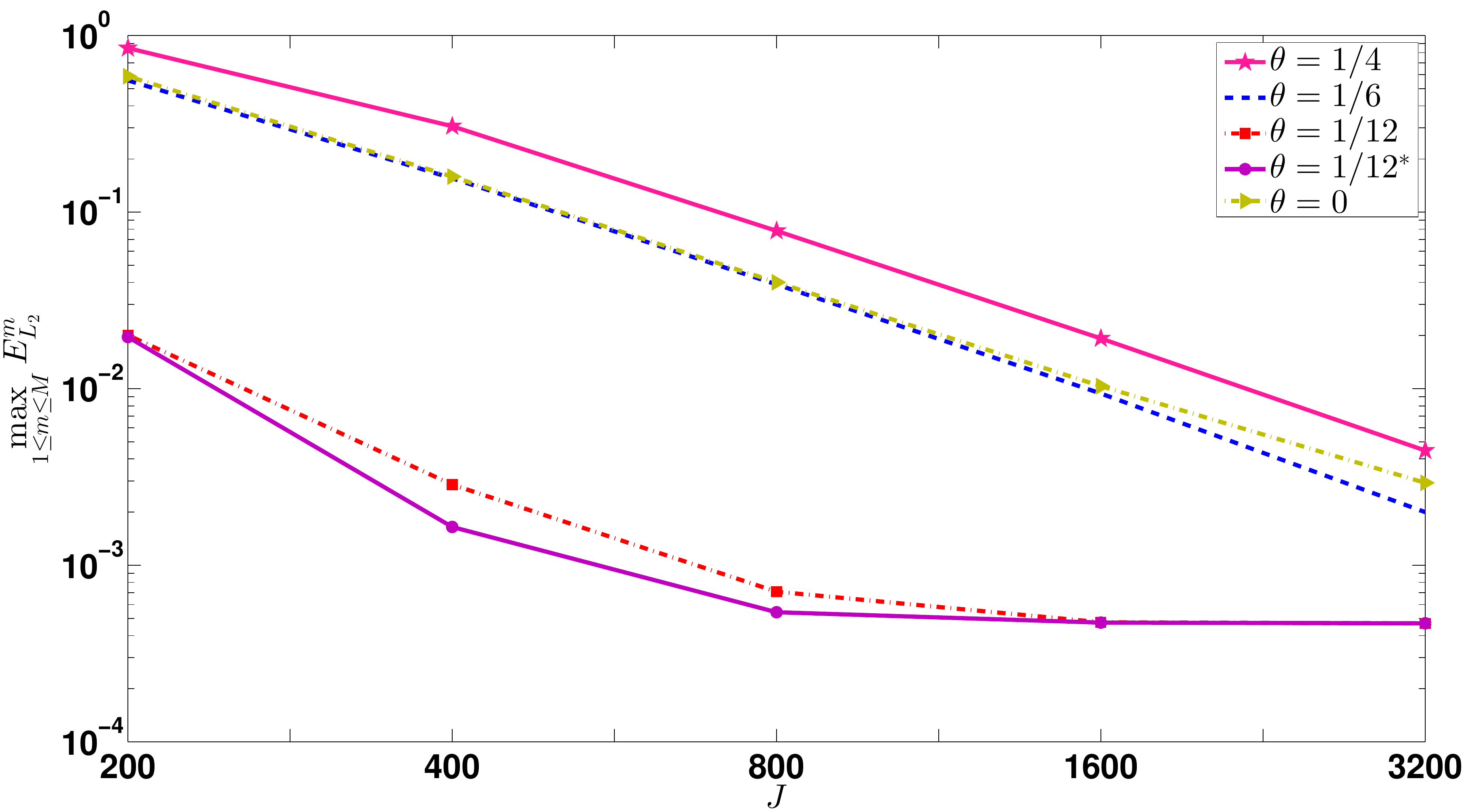} } \footnotesize{(c) in $L^2$-norm (SDTBC)} \\
    \end{minipage}\hfill
    \begin{minipage}[h]{0.5\linewidth}\center{
        \includegraphics[width=1\linewidth]{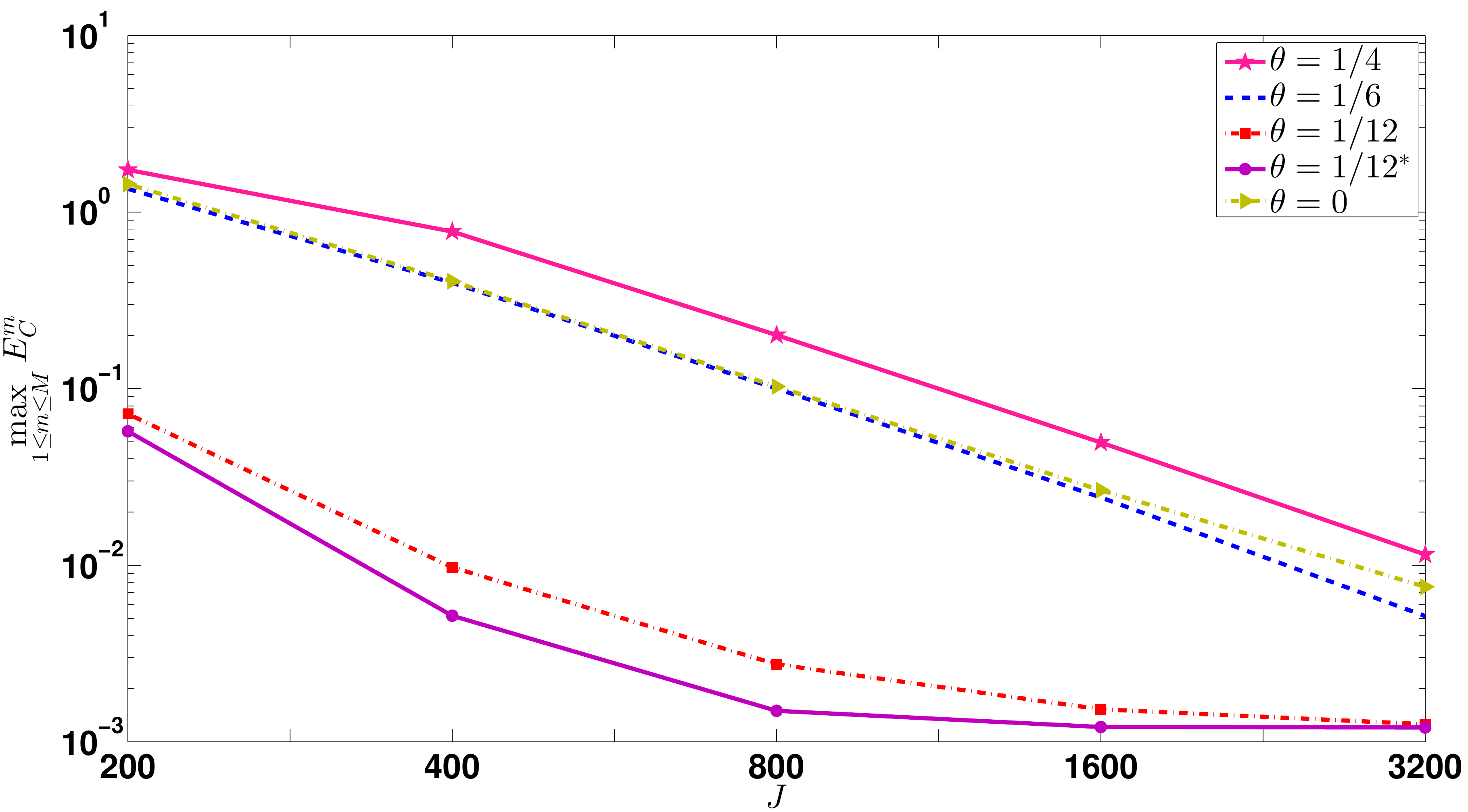}
         } \footnotesize{(d) in $C$-norm (SDTBC)} \\
    \end{minipage}
\caption{\small The maximum in time absolute errors in $L^2$ и $C$ norms for the numerical solutions using the DTBC (upper) and the SDTBC (lower) $\theta=0,\frac{1}{12},\frac{1}{6},\frac{1}{4}$ and $\frac{1}{12}^*$ (corresponding to $\theta=\frac{1}{12}$ and the ISDTBC), in dependence with $J=200,400,800,1600$ and $3200$, for $M=3000$}
\label{fig:EX01:DTBC:MaxAbsError:M=3000}
\end{figure}
\par On Figure \ref{fig:EX01:DTBC:MaxRelError:M=3000}, we show the corresponding maximum in time \textit{relative} $L^2$ and $C$ errors for the same $\theta$ in dependence with the same values of $J$, once again for $M=3000$.
In the DTBC case, the behavior of the relative errors and absolute ones is close (except the minimal $J=200$).
But in the SDTBC case, the situation is different. Namely, the scheme for $\theta=\frac{1}{12}$ loses its advantages and is no more the best in either $L^2$-norm or $C$-one. The relative $L^2$ errors decrease strictly as $\theta$ increases.
The relative $C$ error is the largest also for $\theta=0$ but the smallest for $\theta=\frac{1}{6}$ now whereas the similar errors for $\theta=\frac{1}{12}$ and $\theta=\frac{1}{4}$ are located between them and are very close to each other for $J\geq 400$.
Once again passing to the ISDTBC reduces the relative error in $L^2$-norm and especially in $C$-norm significantly and makes the scheme for $\theta=\frac{1}{12}$ the best one.
\par Clearly on both Figures \ref{fig:EX01:DTBC:MaxAbsError:M=3000} and \ref{fig:EX01:DTBC:MaxRelError:M=3000}, the upper and lower graphs for $\theta=\frac{1}{4}$ are the same since the DTBC and the SDTBC coincide in this case.
\begin{figure}[htbp]
    \begin{minipage}[h]{0.5\linewidth}\center{
        \includegraphics[width=1\linewidth]{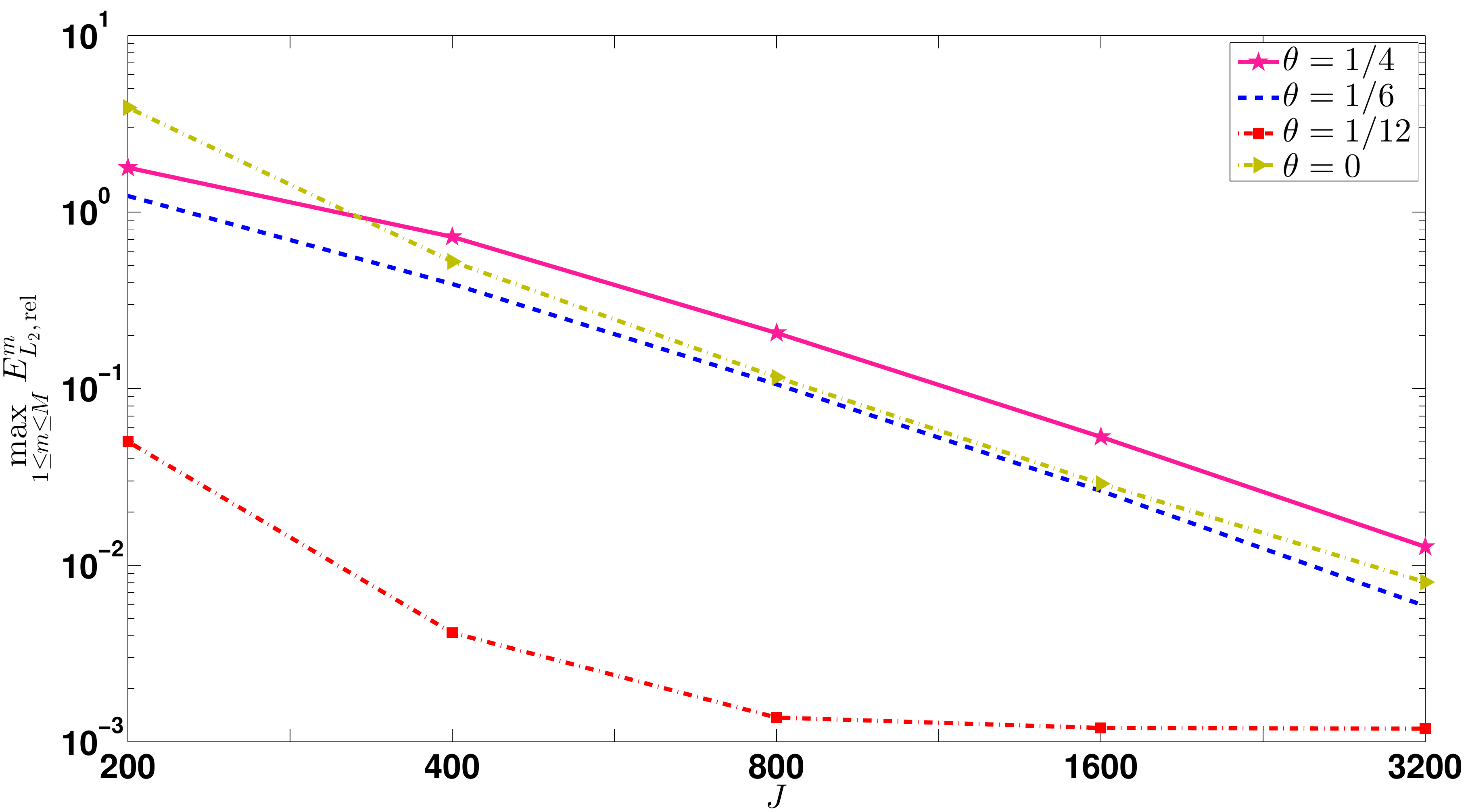}
    } \footnotesize{(a) in $L^2$-norm (DTBC)} \\
    \end{minipage}\hfill
    \begin{minipage}[h]{0.5\linewidth}\center{
        \includegraphics[width=1\linewidth]{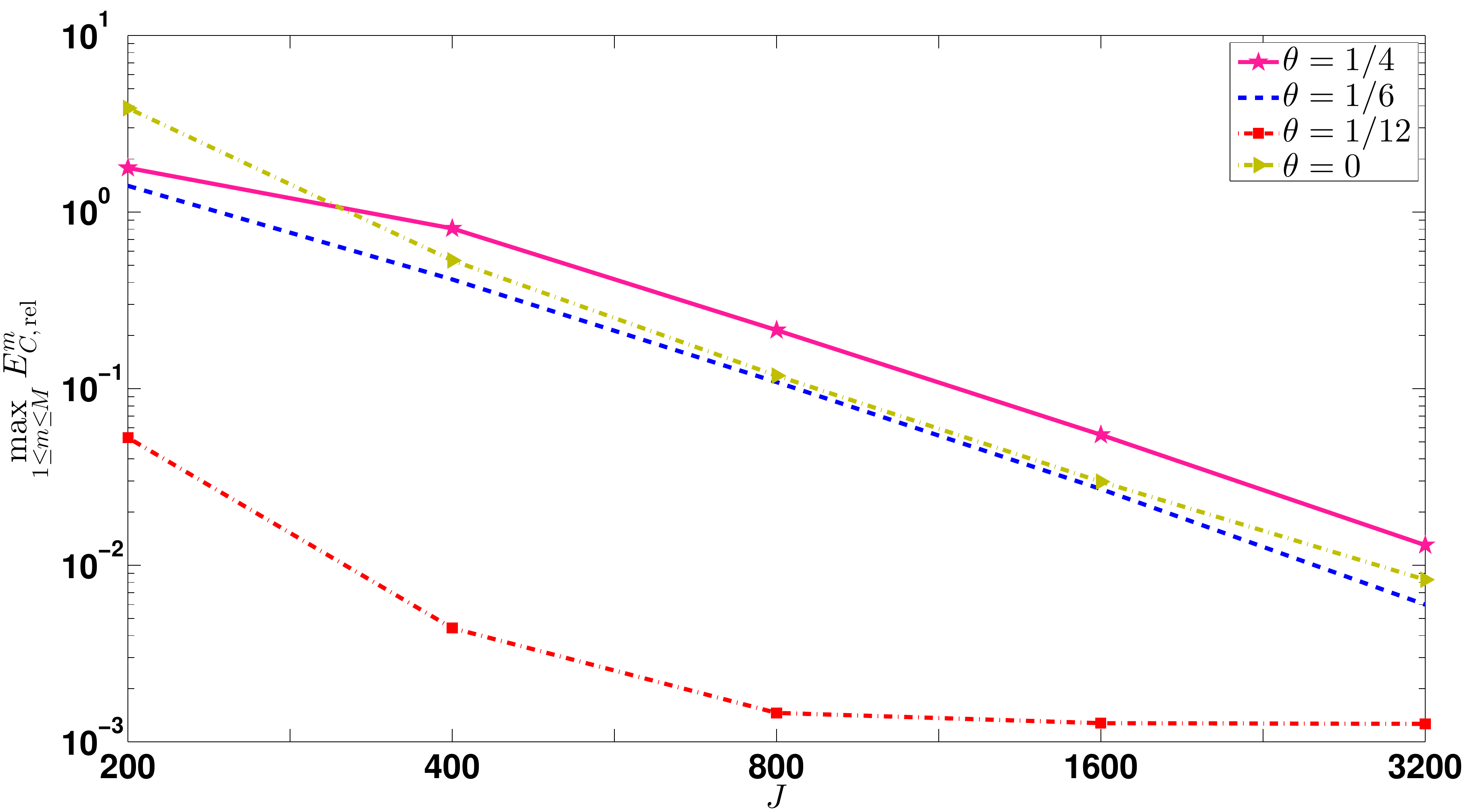}
    } \footnotesize{(b) in $C$-norm (DTBC)} \\
    \end{minipage}
    \begin{minipage}[h]{0.5\linewidth}\center{
        \includegraphics[width=1\linewidth]{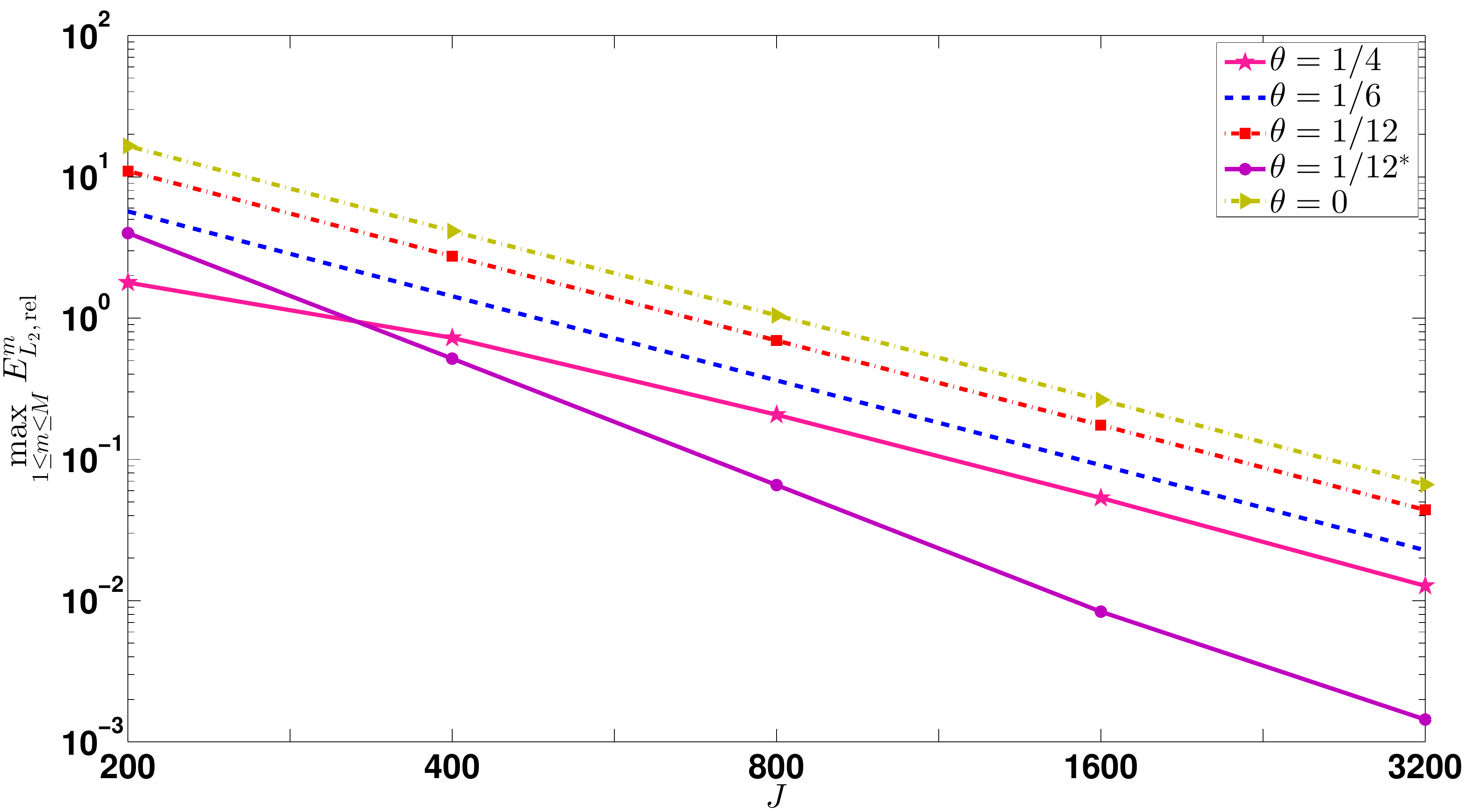}
            } \footnotesize{(c) in $L^2$-norm (SDTBC)} \\
    \end{minipage}\hfill
    \begin{minipage}[h]{0.5\linewidth}\center{
        \includegraphics[width=1\linewidth]{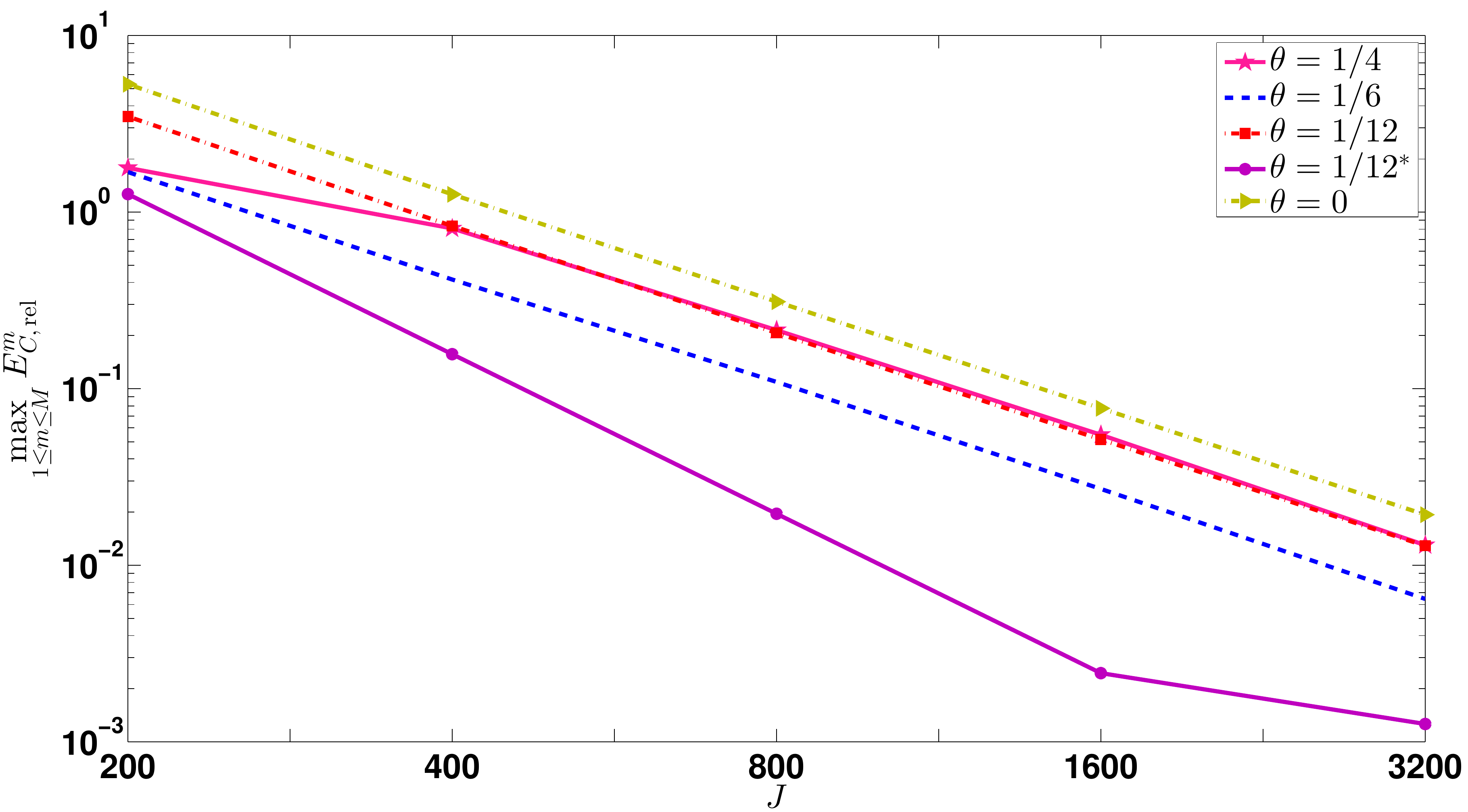} 
    } \footnotesize{(d) in $C$-norm (SDTBC)} \\
    \end{minipage}
\caption{\small The maximum in time relative errors in $L^2$ и $C$ for the numerical solutions using the DTBC (upper) and the SDTBC (lower) for $\theta=0,\frac{1}{12},\frac{1}{6},\frac{1}{4}$ and $\frac{1}{12}^*$ (corresponding to $\theta=\frac{1}{12}$ and the ISDTBC), in dependence with $J=200,400,800,1600$ and $3200$, for $M=3000$}
\label{fig:EX01:DTBC:MaxRelError:M=3000}
\end{figure}
\par Finally, for $\theta=\frac{1}{12}$ and $J=3200$, in Table \ref{tab:EX01:DTBC:M} we present various errors for the numerical solutions using the DTBC and the SDTBC as $M$ increases.
In the DTBC case (the upper table), both the absolute and relative error decreasing orders are very close to 2.
But in the SDTBC case (the middle table), only the absolute error decreasing orders are close to 2 whereas the relative errors slightly decrease only for moderate values of $M$ and then remain almost unchanged. In the ISDTBC case (the lower table), all the errors are very close to the DTBC one (except the last two values of $E_{L^2,{\rm rel}}$).
\par Comparing the last results in the DTBC and the SDTBC cases, we have also found that the maximal absolute differences between the corresponding numerical solutions are less then $4.42\cdot10^{-5}$ in $L^2$-norm and $8.87\cdot10^{-5}$ in $C$-norm for all values of $M$ in the last table. Thus they are notably smaller than the absolute errors of both solutions, i.e., the numerical solutions are much closer to each other than to the exact one.
\par In addition, for the selected $J$, notice that the runtime is practically proportional to $M$ since the corresponding ratios of runtimes equal 1.98, 2.04, 1.92, 2.02 and 1.95, 2, 2.08, 2 respectively in the DTBC and the SDTBC cases. Thus the total cost for single computing the DTBC kernel and $M$-multiple computing the discrete convolutions in the DTBC or the SDTBC is inessential with respect to the cost for solving the linear algebraic systems in computing the numerical solution at all $M$ time levels.
\par The above and some other accomplished numerical experiments (involving non-zero potential $V$) demonstrate nice absolute error properties of the SDTBC. Moreover, they show that the closeness of the DTBC and the SDTBC deserves to be studied in more detail.

\section*{Acknowledgments}

The study is supported by The National Research University – Higher School of Economics' Academic Fund Program in 2014-2015, research grant No. 14-01-0014 (for the first author) and by the Russian Foundation for Basic Research, project No. 14-01-90009-Bel (for the second one).

\bigskip
\par\it{E-mail address}: {azlotnik2008@gmail.com}
\par\it{E-mail address}: {ilya.zlotnik@gmail.com}
\newpage
\begin{table}
\centering{
    \begin {tabular}{r<{\pgfplotstableresetcolortbloverhangright }@{}l<{\pgfplotstableresetcolortbloverhangleft }r<{\pgfplotstableresetcolortbloverhangright }@{}l<{\pgfplotstableresetcolortbloverhangleft }r<{\pgfplotstableresetcolortbloverhangright }@{}l<{\pgfplotstableresetcolortbloverhangleft }r<{\pgfplotstableresetcolortbloverhangright }@{}l<{\pgfplotstableresetcolortbloverhangleft }r<{\pgfplotstableresetcolortbloverhangright }@{}l<{\pgfplotstableresetcolortbloverhangleft }r<{\pgfplotstableresetcolortbloverhangright }@{}l<{\pgfplotstableresetcolortbloverhangleft }r<{\pgfplotstableresetcolortbloverhangright }@{}l<{\pgfplotstableresetcolortbloverhangleft }r<{\pgfplotstableresetcolortbloverhangright }@{}l<{\pgfplotstableresetcolortbloverhangleft }r<{\pgfplotstableresetcolortbloverhangright }@{}l<{\pgfplotstableresetcolortbloverhangleft }}%
\toprule \multicolumn {2}{c}{$J$}&\multicolumn {2}{c}{$E_{L^2}$}&\multicolumn {2}{c}{$R_{L^2}$}&\multicolumn {2}{c}{$E_C$}&\multicolumn {2}{c}{$R_{C}$}&\multicolumn {2}{c}{$E_{L^2,\,{\rm rel}}$}&\multicolumn {2}{c}{$R_{L^2,\,{\rm rel}}$}&\multicolumn {2}{c}{$E_{C,\,{\rm rel}}$}&\multicolumn {2}{c}{$R_{C,\,{\rm rel}}$}\\\midrule %
$200$&$$&$1$&$.92\cdot 10^{-2}$&--&&$4$&$.93\cdot 10^{-2}$&--&&$4$&$.92\cdot 10^{-2}$&--&&$5$&$.19\cdot 10^{-2}$&--&\\%
$400$&$$&$1$&$.29\cdot 10^{-3}$&$14$&$.95$&$3$&$.31\cdot 10^{-3}$&$14$&$.91$&$3$&$.26\cdot 10^{-3}$&$15$&$.1$&$3$&$.46\cdot 10^{-3}$&$14$&$.99$\\%
$800$&$$&$1$&$.90\cdot 10^{-4}$&$6$&$.77$&$4$&$.89\cdot 10^{-4}$&$6$&$.76$&$4$&$.81\cdot 10^{-4}$&$6$&$.77$&$5$&$.11\cdot 10^{-4}$&$6$&$.76$\\%
$1\,600$&$$&$1$&$.22\cdot 10^{-4}$&$1$&$.56$&$3$&$.14\cdot 10^{-4}$&$1$&$.56$&$3$&$.09\cdot 10^{-4}$&$1$&$.56$&$3$&$.28\cdot 10^{-4}$&$1$&$.56$\\%
$3\,200$&$$&$1$&$.17\cdot 10^{-4}$&$1$&$.04$&$3$&$.03\cdot 10^{-4}$&$1$&$.04$&$2$&$.98\cdot 10^{-4}$&$1$&$.04$&$3$&$.17\cdot 10^{-4}$&$1$&$.04$\\\bottomrule %
\end {tabular}%

    \begin {tabular}{r<{\pgfplotstableresetcolortbloverhangright }@{}l<{\pgfplotstableresetcolortbloverhangleft }r<{\pgfplotstableresetcolortbloverhangright }@{}l<{\pgfplotstableresetcolortbloverhangleft }r<{\pgfplotstableresetcolortbloverhangright }@{}l<{\pgfplotstableresetcolortbloverhangleft }r<{\pgfplotstableresetcolortbloverhangright }@{}l<{\pgfplotstableresetcolortbloverhangleft }r<{\pgfplotstableresetcolortbloverhangright }@{}l<{\pgfplotstableresetcolortbloverhangleft }r<{\pgfplotstableresetcolortbloverhangright }@{}l<{\pgfplotstableresetcolortbloverhangleft }r<{\pgfplotstableresetcolortbloverhangright }@{}l<{\pgfplotstableresetcolortbloverhangleft }r<{\pgfplotstableresetcolortbloverhangright }@{}l<{\pgfplotstableresetcolortbloverhangleft }r<{\pgfplotstableresetcolortbloverhangright }@{}l<{\pgfplotstableresetcolortbloverhangleft }}%
\toprule \multicolumn {2}{c}{$J$}&\multicolumn {2}{c}{$E_{L^2}$}&\multicolumn {2}{c}{$R_{L^2}$}&\multicolumn {2}{c}{$E_C$}&\multicolumn {2}{c}{$R_{C}$}&\multicolumn {2}{c}{$E_{L^2,\,{\rm rel}}$}&\multicolumn {2}{c}{$R_{L^2,\,{\rm rel}}$}&\multicolumn {2}{c}{$E_{C,\,{\rm rel}}$}&\multicolumn {2}{c}{$R_{C,\,{\rm rel}}$}\\\midrule %
$200$&$$&$1$&$.98\cdot 10^{-2}$&--&&$7$&$.11\cdot 10^{-2}$&--&&$10$&$.98$&--&&$3$&$.48$&--&\\%
$400$&$$&$2$&$.86\cdot 10^{-3}$&$6$&$.91$&$8$&$.87\cdot 10^{-3}$&$8$&$.02$&$2$&$.75$&$4$&$$&$0$&$.84$&$4$&$.16$\\%
$800$&$$&$7$&$.09\cdot 10^{-4}$&$4$&$.04$&$1$&$.88\cdot 10^{-3}$&$4$&$.71$&$0$&$.69$&$3$&$.96$&$0$&$.21$&$4$&$.04$\\%
$1\,600$&$$&$1$&$.77\cdot 10^{-4}$&$4$&$.01$&$6$&$.53\cdot 10^{-4}$&$2$&$.88$&$0$&$.17$&$3$&$.97$&$5$&$.16\cdot 10^{-2}$&$4$&$.01$\\%
$3\,200$&$$&$1$&$.18\cdot 10^{-4}$&$1$&$.5$&$3$&$.80\cdot 10^{-4}$&$1$&$.72$&$4$&$.39\cdot 10^{-2}$&$3$&$.98$&$1$&$.29\cdot 10^{-2}$&$4$&$$\\\bottomrule %
\end {tabular}%

    \begin {tabular}{r<{\pgfplotstableresetcolortbloverhangright }@{}l<{\pgfplotstableresetcolortbloverhangleft }r<{\pgfplotstableresetcolortbloverhangright }@{}l<{\pgfplotstableresetcolortbloverhangleft }r<{\pgfplotstableresetcolortbloverhangright }@{}l<{\pgfplotstableresetcolortbloverhangleft }r<{\pgfplotstableresetcolortbloverhangright }@{}l<{\pgfplotstableresetcolortbloverhangleft }r<{\pgfplotstableresetcolortbloverhangright }@{}l<{\pgfplotstableresetcolortbloverhangleft }r<{\pgfplotstableresetcolortbloverhangright }@{}l<{\pgfplotstableresetcolortbloverhangleft }r<{\pgfplotstableresetcolortbloverhangright }@{}l<{\pgfplotstableresetcolortbloverhangleft }r<{\pgfplotstableresetcolortbloverhangright }@{}l<{\pgfplotstableresetcolortbloverhangleft }r<{\pgfplotstableresetcolortbloverhangright }@{}l<{\pgfplotstableresetcolortbloverhangleft }}%
\toprule \multicolumn {2}{c}{$J$}&\multicolumn {2}{c}{$E_{L^2}$}&\multicolumn {2}{c}{$R_{L^2}$}&\multicolumn {2}{c}{$E_C$}&\multicolumn {2}{c}{$R_{C}$}&\multicolumn {2}{c}{$E_{L^2,\,{\rm rel}}$}&\multicolumn {2}{c}{$R_{L^2,\,{\rm rel}}$}&\multicolumn {2}{c}{$E_{C,\,{\rm rel}}$}&\multicolumn {2}{c}{$R_{C,\,{\rm rel}}$}\\\midrule %
$200$&$$&$1$&$.93\cdot 10^{-2}$&--&&$5$&$.65\cdot 10^{-2}$&--&&$3$&$.99$&--&&$1$&$.26$&--&\\%
$400$&$$&$1$&$.30\cdot 10^{-3}$&$14$&$.8$&$4$&$.30\cdot 10^{-3}$&$13$&$.15$&$0$&$.52$&$7$&$.74$&$0$&$.16$&$8$&$.06$\\%
$800$&$$&$1$&$.92\cdot 10^{-4}$&$6$&$.79$&$6$&$.12\cdot 10^{-4}$&$7$&$.02$&$6$&$.57\cdot 10^{-2}$&$7$&$.85$&$1$&$.96\cdot 10^{-2}$&$8$&$.01$\\%
$1\,600$&$$&$1$&$.22\cdot 10^{-4}$&$1$&$.57$&$3$&$.24\cdot 10^{-4}$&$1$&$.89$&$8$&$.30\cdot 10^{-3}$&$7$&$.92$&$2$&$.45\cdot 10^{-3}$&$8$&$$\\%
$3\,200$&$$&$1$&$.17\cdot 10^{-4}$&$1$&$.04$&$3$&$.01\cdot 10^{-4}$&$1$&$.08$&$1$&$.07\cdot 10^{-3}$&$7$&$.75$&$3$&$.15\cdot 10^{-4}$&$7$&$.79$\\\bottomrule %
\end {tabular}%

}
\caption{\small Errors and their ratios for the numerical solutions using the DTBC (upper), the SDTBC (middle) and the ISDTBC (lower)
in dependence with $J$, for $\theta=\frac{1}{12}$ and $M=6000$}
\label{tab:EX01:DTBC:J}
\end{table}

\begin{table}
\centering{
    \begin {tabular}{r<{\pgfplotstableresetcolortbloverhangright }@{}l<{\pgfplotstableresetcolortbloverhangleft }r<{\pgfplotstableresetcolortbloverhangright }@{}l<{\pgfplotstableresetcolortbloverhangleft }r<{\pgfplotstableresetcolortbloverhangright }@{}l<{\pgfplotstableresetcolortbloverhangleft }r<{\pgfplotstableresetcolortbloverhangright }@{}l<{\pgfplotstableresetcolortbloverhangleft }r<{\pgfplotstableresetcolortbloverhangright }@{}l<{\pgfplotstableresetcolortbloverhangleft }r<{\pgfplotstableresetcolortbloverhangright }@{}l<{\pgfplotstableresetcolortbloverhangleft }r<{\pgfplotstableresetcolortbloverhangright }@{}l<{\pgfplotstableresetcolortbloverhangleft }r<{\pgfplotstableresetcolortbloverhangright }@{}l<{\pgfplotstableresetcolortbloverhangleft }r<{\pgfplotstableresetcolortbloverhangright }@{}l<{\pgfplotstableresetcolortbloverhangleft }}%
\toprule \multicolumn {2}{c}{$M$}&\multicolumn {2}{c}{$E_{L^2}$}&\multicolumn {2}{c}{$R_{L^2}$}&\multicolumn {2}{c}{$E_{C}$}&\multicolumn {2}{c}{$R_{C}$}&\multicolumn {2}{c}{$E_{L^2,\,{\rm rel}}$}&\multicolumn {2}{c}{$R_{L^2,\,{\rm rel}}$}&\multicolumn {2}{c}{$E_{C,\,{\rm rel}}$}&\multicolumn {2}{c}{$R_{C,\,{\rm rel}}$}\\\midrule %
$375$&$$&$3$&$.00\cdot 10^{-2}$&--&&$7$&$.73\cdot 10^{-2}$&--&&$7$&$.77\cdot 10^{-2}$&--&&$8$&$.18\cdot 10^{-2}$&--&\\%
$750$&$$&$7$&$.50\cdot 10^{-3}$&$4$&$$&$1$&$.93\cdot 10^{-2}$&$4$&$$&$1$&$.91\cdot 10^{-2}$&$4$&$.07$&$2$&$.03\cdot 10^{-2}$&$4$&$.03$\\%
$1\,500$&$$&$1$&$.88\cdot 10^{-3}$&$4$&$$&$4$&$.83\cdot 10^{-3}$&$4$&$$&$4$&$.76\cdot 10^{-3}$&$4$&$.02$&$5$&$.06\cdot 10^{-3}$&$4$&$.01$\\%
$3\,000$&$$&$4$&$.69\cdot 10^{-4}$&$4$&$$&$1$&$.21\cdot 10^{-3}$&$4$&$$&$1$&$.19\cdot 10^{-3}$&$4$&$$&$1$&$.26\cdot 10^{-3}$&$4$&$$\\%
$6\,000$&$$&$1$&$.17\cdot 10^{-4}$&$3$&$.99$&$3$&$.03\cdot 10^{-4}$&$3$&$.99$&$2$&$.98\cdot 10^{-4}$&$3$&$.99$&$3$&$.17\cdot 10^{-4}$&$3$&$.99$\\\bottomrule %
\end {tabular}%

    \begin {tabular}{r<{\pgfplotstableresetcolortbloverhangright }@{}l<{\pgfplotstableresetcolortbloverhangleft }r<{\pgfplotstableresetcolortbloverhangright }@{}l<{\pgfplotstableresetcolortbloverhangleft }r<{\pgfplotstableresetcolortbloverhangright }@{}l<{\pgfplotstableresetcolortbloverhangleft }r<{\pgfplotstableresetcolortbloverhangright }@{}l<{\pgfplotstableresetcolortbloverhangleft }r<{\pgfplotstableresetcolortbloverhangright }@{}l<{\pgfplotstableresetcolortbloverhangleft }r<{\pgfplotstableresetcolortbloverhangright }@{}l<{\pgfplotstableresetcolortbloverhangleft }r<{\pgfplotstableresetcolortbloverhangright }@{}l<{\pgfplotstableresetcolortbloverhangleft }r<{\pgfplotstableresetcolortbloverhangright }@{}l<{\pgfplotstableresetcolortbloverhangleft }r<{\pgfplotstableresetcolortbloverhangright }@{}l<{\pgfplotstableresetcolortbloverhangleft }}%
\toprule \multicolumn {2}{c}{$M$}&\multicolumn {2}{c}{$E_{L^2}$}&\multicolumn {2}{c}{$R_{L^2}$}&\multicolumn {2}{c}{$E_{C}$}&\multicolumn {2}{c}{$R_{C}$}&\multicolumn {2}{c}{$E_{L^2,\,{\rm rel}}$}&\multicolumn {2}{c}{$R_{L^2,\,{\rm rel}}$}&\multicolumn {2}{c}{$E_{C,\,{\rm rel}}$}&\multicolumn {2}{c}{$R_{C,\,{\rm rel}}$}\\\midrule %
$375$&$$&$3$&$.00\cdot 10^{-2}$&--&&$7$&$.73\cdot 10^{-2}$&--&&$7$&$.91\cdot 10^{-2}$&--&&$8$&$.17\cdot 10^{-2}$&--&\\%
$750$&$$&$7$&$.50\cdot 10^{-3}$&$4$&$$&$1$&$.93\cdot 10^{-2}$&$4$&$$&$4$&$.67\cdot 10^{-2}$&$1$&$.69$&$2$&$.02\cdot 10^{-2}$&$4$&$.04$\\%
$1\,500$&$$&$1$&$.87\cdot 10^{-3}$&$4$&$$&$4$&$.83\cdot 10^{-3}$&$4$&$$&$4$&$.40\cdot 10^{-2}$&$1$&$.06$&$1$&$.29\cdot 10^{-2}$&$1$&$.57$\\%
$3\,000$&$$&$4$&$.69\cdot 10^{-4}$&$4$&$$&$1$&$.26\cdot 10^{-3}$&$3$&$.84$&$4$&$.39\cdot 10^{-2}$&$1$&$$&$1$&$.29\cdot 10^{-2}$&$1$&$$\\%
$6\,000$&$$&$1$&$.18\cdot 10^{-4}$&$3$&$.97$&$3$&$.80\cdot 10^{-4}$&$3$&$.31$&$4$&$.39\cdot 10^{-2}$&$1$&$$&$1$&$.29\cdot 10^{-2}$&$1$&$$\\\bottomrule %
\end {tabular}%

    \begin {tabular}{r<{\pgfplotstableresetcolortbloverhangright }@{}l<{\pgfplotstableresetcolortbloverhangleft }r<{\pgfplotstableresetcolortbloverhangright }@{}l<{\pgfplotstableresetcolortbloverhangleft }r<{\pgfplotstableresetcolortbloverhangright }@{}l<{\pgfplotstableresetcolortbloverhangleft }r<{\pgfplotstableresetcolortbloverhangright }@{}l<{\pgfplotstableresetcolortbloverhangleft }r<{\pgfplotstableresetcolortbloverhangright }@{}l<{\pgfplotstableresetcolortbloverhangleft }r<{\pgfplotstableresetcolortbloverhangright }@{}l<{\pgfplotstableresetcolortbloverhangleft }r<{\pgfplotstableresetcolortbloverhangright }@{}l<{\pgfplotstableresetcolortbloverhangleft }r<{\pgfplotstableresetcolortbloverhangright }@{}l<{\pgfplotstableresetcolortbloverhangleft }r<{\pgfplotstableresetcolortbloverhangright }@{}l<{\pgfplotstableresetcolortbloverhangleft }}%
\toprule \multicolumn {2}{c}{$M$}&\multicolumn {2}{c}{$E_{L^2}$}&\multicolumn {2}{c}{$R_{L^2}$}&\multicolumn {2}{c}{$E_{C}$}&\multicolumn {2}{c}{$R_{C}$}&\multicolumn {2}{c}{$E_{L^2,\,{\rm rel}}$}&\multicolumn {2}{c}{$R_{L^2,\,{\rm rel}}$}&\multicolumn {2}{c}{$E_{C,\,{\rm rel}}$}&\multicolumn {2}{c}{$R_{C,\,{\rm rel}}$}\\\midrule %
$375$&$$&$3$&$.00\cdot 10^{-2}$&--&&$7$&$.73\cdot 10^{-2}$&--&&$7$&$.77\cdot 10^{-2}$&--&&$8$&$.18\cdot 10^{-2}$&--&\\%
$750$&$$&$7$&$.50\cdot 10^{-3}$&$4$&$$&$1$&$.93\cdot 10^{-2}$&$4$&$$&$1$&$.91\cdot 10^{-2}$&$4$&$.07$&$2$&$.03\cdot 10^{-2}$&$4$&$.03$\\%
$1\,500$&$$&$1$&$.88\cdot 10^{-3}$&$4$&$$&$4$&$.83\cdot 10^{-3}$&$4$&$$&$4$&$.76\cdot 10^{-3}$&$4$&$.02$&$5$&$.06\cdot 10^{-3}$&$4$&$.01$\\%
$3\,000$&$$&$4$&$.69\cdot 10^{-4}$&$4$&$$&$1$&$.21\cdot 10^{-3}$&$4$&$$&$1$&$.44\cdot 10^{-3}$&$3$&$.31$&$1$&$.26\cdot 10^{-3}$&$4$&$$\\%
$6\,000$&$$&$1$&$.17\cdot 10^{-4}$&$3$&$.99$&$3$&$.01\cdot 10^{-4}$&$4$&$.01$&$1$&$.07\cdot 10^{-3}$&$1$&$.34$&$3$&$.15\cdot 10^{-4}$&$4$&$.01$\\\bottomrule %
\end {tabular}%

}
\caption{\small Errors and and their ratios for the numerical solutions using the DTBC (upper), the SDTBC (middle) and the ISDTBC (lower)
in dependence with $M$, for $\theta=\frac{1}{12}$ and $J=3200$}
\label{tab:EX01:DTBC:M}
\end{table}

\end{document}